\title{Critical Window of The Symmetric Perceptron}
\author{Dylan J. Altschuler}
\address{D.J.\ Altschuler\hfill\break
Courant Institute\\ New York University\\
251 Mercer Street\\ New York, NY 10012, USA.}
\email{dylan.altschuler@courant.nyu.edu}
\newcommand*{\PP}[2][]{\mathbb{P}_{#1}\left[#2\right]}
\newcommand*{\E}[2][]{\mathbb{E}_{#1\unskip\space}\left[#2\right]}
\newcommand*{\cO}[1]{\mathcal{O}\left(#1\right)}
\newcommand*{\cOm}[1]{\Omega\left(#1\right)}
\newcommand*{\cT}[1]{\Theta\left(#1\right)}
\newcommand*{\cE}[1]{\exp\left\{#1\right\}}
\newcommand*{\oo}{\mathrm{o}}
\newcommand*{\mO}[1]{\left(1 + \mathcal{O}\left(#1\right)\right)}
\newcommand{\disc}{\mathrm{disc}}
\newcommand*{\ZZ}{\mathbb{Z}}
\newcommand*{\RR}{\mathbb{R}}
\newcommand*{\NN}{\mathbb{N}}
\newcommand*{\ind}{\mathbbm{1}}
\newcommand*{\la}{\langle}
\newcommand*{\ra}{\rangle}
\newcommand*{\eps}{\epsilon}
\newcommand*{\del}{\delta}
\newcommand{\defeq}{\coloneqq}
\newcommand*{\pa}[1]{\left(#1\right)}
\newcommand*{\cb}[1]{\left\{#1\right\}}
\newcommand*{\innerprod}[2]{\left\la #1, #2 \right\ra}
\newcommand*{\ba}[1]{\left|#1\right|}
\newcommand*{\cL}{\mathcal{L}}
\newcommand*{\kc}{{K_c}}
\newcommand*{\cp}{\bm{\alpha_*}}
\newcommand*{\ac}{{\alpha_c}}
\newcommand*{\I}{\mathrm{I}}
\newcommand*{\creg}{C_{r}}
\newtheorem{theorem}{Theorem}[section]
\newtheorem{conjecture}[theorem]{Conjecture}
\newtheorem{corollary}[theorem]{Corollary}
\newtheorem{definition}[theorem]{Definition}
\newtheorem{lemma}[theorem]{Lemma}
\newtheorem{proposition}[theorem]{Proposition}
\begin{document}
\maketitle
\begin{abstract}    
We study the critical window of the symmetric binary perceptron, or equivalently, random combinatorial discrepancy. Consider the problem of finding a $\pm1$-valued vector $\sigma$ satisfying $\|A\sigma\|_\infty \le K$, where $A$ is an $\alpha n \times n$ matrix with iid Gaussian entries. For fixed $K$, at which constraint densities $\alpha$ is this constraint satisfaction problem (CSP) satisfiable? A sharp threshold was recently established by Perkins and Xu \cite{PX}, and Abbe, Li, and Sly \cite{ALS1}, answering this to first order. Namely, for each $K$ there exists an explicit critical density $\ac$ so that for any fixed $\eps > 0$, with high probability the CSP is satisfiable  for $\alpha n < (\ac - \eps ) n$ and unsatisfiable for $\alpha n > (\ac + \eps) n$. This corresponds to a bound of $o(n)$ on the size of the critical window. 
    
We sharpen these results significantly, as well as provide exponential tail bounds. Our main result is that, perhaps surprisingly, the critical window is actually at most of order $\log(n)$. More precisely, for a large constant $C$, with high probability the CSP is satisfiable for $\alpha n < \ac n - C\log(n)$ and unsatisfiable for $\alpha n > \ac n + C$. These results add the the symmetric perceptron to the short list of CSP models for which a critical window is rigorously known, and to the even shorter list for which this window is known to have nearly constant width.
\end{abstract}

\section{Introduction}
Discrepancy arises as a fundamental quantity in combinatorics, functional analysis, geometry, and optimization. The \textbf{combinatorial discrepancy} of a matrix $A \in \RR^{\alpha n \times n}$ is given by the following optimization problem:
\[
    \mathrm{disc}(A) \defeq  \min_{\sigma \in \cb{-1,+1}^n} \|A\sigma\|_\infty\,.
\]
In words, discrepancy is the task of finding some $\sigma \in \cb{\pm 1}^n$ that has small inner product in absolute value with all rows of $A$. This task is naturally encoded as a CSP (constraint satisfaction problem). Fix a positive parameter $K$ and look for assignments $\sigma \in \cb{\pm 1}^n$ that satisfy each of the $\alpha n$ constraints encoded by the rows of $A$:
\begin{equation}\label{eq:CSP}
    \cb{\ba{\innerprod{A_1}{\sigma}} \le K} \wedge \dots \wedge \cb{\ba{\innerprod{A_{\alpha n}}{\sigma}} \le K} \,.
\end{equation}
We say the problem is satisfiable if such an assignment $\sigma$ exists. For fixed $K$, if we construct \cref{eq:CSP} by adding one constraint at a time, how many constraints can we add before the equation becomes unsatisfiable? 

Much recent work focuses on understanding, for random $A$, the parameters $K$ and $\alpha$ for which \cref{eq:CSP} is satisfiable with positive probability. For random CSP, it is natural to suspect the existence of a sharp threshold \cite{DSS-KSAT, fried,hatami}. A \textit{sharp threshold} is said to occur if there exists some $\ac$ so that for any $\eps > 0$, the probability that \cref{eq:CSP} is satisfiable tends to one for $\alpha n < (\ac -\eps) n$ and to zero for $\alpha n > (\ac + \eps) n$, as $n$ grows. We now formally state the model of randomness that will be studied.

\begin{definition}\label{def:ensemble}
Let $\alpha \defeq  \alpha(n)$. Say an $\alpha n \times n$ matrix $A$ is $(\alpha,n)$-Gaussian if it has iid normal entries with mean 0 and variance $1/n$.
\end{definition}

The normalization is so that $\disc(A)$ is on the constant scale.
We will only consider $\alpha \defeq  \alpha(n)$ tending to a finite, strictly postive limit (see \cref{section:prev-works} for $\alpha \to 0$). The discrepancy of such matrices has gathered significant recent attention in the statistical physics community due to connections with the celebrated \textit{Binary Perceptron}. 

The binary perceptron is an idealized model of learning that has enjoyed almost six decades of intense study by the statistical physics community. While there is an extensive body of detailed physics predictions for the perceptron, these predictions have largely resisted rigorous mathematical treatment \cite{dingsun-perceptron, gd1,mez, rb, tgd, small-densities}. Aubin, Perkins, and Zdeborov\'a \cite{APZ} had the insight of interpreting combinatorial discrepancy as a ``symmetric'' analogue of the binary perceptron. Happily, discrepancy is more amenable to analysis and is conjectured to capture most of the interesting behaviours of the perceptron. This connection has already lead to several breakthroughs in both problems \cite{ALS2,ALS1, gamarnik-sbp,nakajima-sun, PX}. 

In brief, the reduction between the SBP and discrepancy is as follows. We are interested in the set of parameters $(K,\alpha)$ for which \cref{eq:CSP} is satisfiable. One could equally well fix $\alpha$ and ask for the smallest $K$, or else fix $K$ and ask for the largest $\alpha$. The former is adopted in discrepancy and the latter for the SBP. Formally, 

\begin{definition}[Symmetric perceptron]
    For a fixed parameter $K$, an instance of the associated \textbf{symmetric binary perceptron} (SBP) problem is given by a sequence of iid Gaussian vectors $\{a_i\}_{i \in \NN}$ with 
    \[
        a_i \sim N(0,n^{-1}\I_{n \times n})\,.
    \]
    The \textbf{storage capacity} of the SBP is the random variable $\cp$ defined as the largest $\alpha$ so that $\alpha n \in \ZZ$ and there exists $\sigma \in \cb{\pm 1}^n$ with 
    \[
        \max_{i \in [\alpha n]} \ba{\innerprod{a_i}{\sigma}} \le K \,.
    \]
\end{definition} 
The original binary perceptron model asks for $\sigma$ to have positive, rather than small, inner product with rows. A simple but important observation (made formal in \cref{lemma:alpha-K} below) is the following.

\begin{quote}
    \textit{Let $\kc$ and $\ac$ denote the ``typical values'' of $\disc(A)$ and $\cp$ respectively. A bound on $|\disc(A)-\kc|$ is equivalent to a bound on $|\cp-\ac|$ of the same order. Thus, fluctuation bounds on the capacity of the SBP or the discrepancy of a Gaussian matrix are equivalent.}
\end{quote}

As such, we discuss these quantities interchangeably. For matrices with iid Gaussian or Rademacher entries, the existence of a \textit{coarse threshold} was shown by Aubin, Perkins, and Zdeborov\'a \cite{APZ} via the second moment method. They also noted the second moment method fails to give a sharp threshold and conjectured a sharp threshold should still occur. The failure of the second moment method to yield a sharp threshold---usually a difficult obstacle to overcome---is a key feature of many interesting CSP, including the binary perceptron. (In fact, the second moment method famously fails to even yield a coarse threshold for the binary perceptron). In an impressive series of works, Perkins and Xu \cite{PX} and Abbe, Li, and Sly \cite{ALS1} independently established the sharp threshold for the SBP, resolving the conjecture of \cite{APZ}.

When a random CSP admits a sharp threshold, a next natural question is to identify the width of the critical window, sometimes also called the scaling window. Namely, fix some small constant $\del > 0$; as a function of $n$, how many constraints must be added for the probability that \eqref{eq:CSP} is satisfiable to decrease from $1-\delta$ to $\delta$? The following proposition summarizes what is known:

\begin{proposition}[Sharp threshold \cite{PX,ALS1}]\label{prop:state}
For any positive constants $K$, $\eps$, and  $\delta$, there exists $\ac \defeq  \ac(K)$ (given explicitly in \cref{eq:ac-def}) so that for $A$ drawn from the $(\alpha,n)$-Gaussian ensemble and $n$ sufficiently large, 
\begin{align*}
        \alpha n < \ac n - \eps n &\implies \PP{\disc(A) > K} < \delta\,, \\
        \alpha n > \ac n + \eps n &\implies \PP{\disc(A) < K} < \delta\,.
    \end{align*}
Equivalently, the capacity of the symmetric perceptron satisfies
    \[
        \PP{\ac n - \eps n < \cp< \ac n + \eps n } > 1 - 2\del
    \]
\end{proposition}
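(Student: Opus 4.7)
The plan is to combine standard moment methods with a sharp-threshold boosting step. The unsatisfiability half is immediate from a first moment computation, while the satisfiability half requires a second-moment lower bound followed by an upgrade from constant to high probability, which I expect to be the main difficulty.

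For the unsatisfiability direction, fix $\sigma \in \cb{\pm 1}^n$. Since $\|\sigma\|_2/\sqrt{n} = 1$, the vector $A\sigma$ has iid $N(0,1)$ coordinates, so with $p_K := \PP{\ba{N(0,1)} \le K}$ and $Z_K := \ba{\cb{\sigma \in \cb{\pm 1}^n : \|A\sigma\|_\infty \le K}}$,
\[
    \EE[Z_K] \;=\; 2^n p_K^{\alpha n}.
\]
Defining $\alpha_c(K) := \log 2 / \log(1/p_K)$ makes $\EE[Z_K]$ exponentially small for $\alpha > \alpha_c+\eps$ and exponentially large for $\alpha < \alpha_c-\eps$. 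Markov's inequality then yields $\PP{\disc(A) \le K} = \PP{Z_K \ge 1} \le e^{-cn} \to 0$ on the supercritical side.

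For the subcritical side I would first apply the second moment method. For $\sigma,\tau$ with overlap $\rho := \innerprod{\sigma}{\tau}/n$, the pair $(\innerprod{A_i}{\sigma}, \innerprod{A_i}{\tau})$ is bivariate standard Gaussian with correlation $\rho$; letting $q_K(\rho) := \PP{\ba{X} \le K,\, \ba{Y} \le K}$ for such $(X,Y)$,
\[
    \EE[Z_K^2] \;=\; 4^n \, \EE_\rho[q_K(\rho)^{\alpha n}]
\]
with $\rho$ the overlap of a uniform pair. A Stirling/Laplace analysis of the exponent $E(\rho) := H_{\mathrm{bin}}((1+\rho)/2) - \log 2 + \alpha \log(q_K(\rho)/p_K^2)$ shows (as in Aubin-Perkins-Zdeborov\'a) that on $[-1,1]$ the unique maximum is $E(0)=0$, with strictly negative Hessian throughout the regime $\alpha < \alpha_c$. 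This yields $\EE[Z_K^2] \le C(\eps)\,\EE[Z_K]^2$, and Paley-Zygmund gives the coarse bound $\PP{Z_K \ge 1} \ge 1/C(\eps) > 0$.

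The final step is to boost this positive constant to $1-\delta$. The event $\cb{Z_K \ge 1}$ is monotone decreasing in the set of rows and invariant under row permutations and under $\sigma \mapsto -\sigma$. My first attempt would be a Friedgut-type sharp threshold criterion adapted to the Gaussian-row model: any monotone event whose threshold is not sharp must be essentially determined by $\cO{1}$ rows, which is incompatible with the symmetry of the model. Failing that, one can follow the Perkins-Xu / Abbe-Li-Sly strategy: couple $A$ to a planted model in which a uniform $\sigma^\ast \in \cb{\pm 1}^n$ is forced to satisfy every constraint, establish contiguity to the null model via a conditional second moment in the subcritical regime, and transfer the satisfiability event back. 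The main obstacle is precisely this boosting step: the unconditional second moment is only order-tight, so Talagrand or Gaussian-isoperimetric concentration on $\log Z_K$ is not sharp enough to turn a positive constant into $1$ directly. One must either port Friedgut's theorem to the continuous model or carry out a quantitative conditional second moment under the planted measure, the latter requiring a detailed overlap analysis near $\rho = 0$ that is the technical heart of the existing proofs.
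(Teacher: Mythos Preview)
This proposition is a \emph{cited} result from \cite{PX,ALS1}, not proved anew in the paper; the paper instead establishes the much sharper quantitative version (\cref{thm:sharp-transition}), which of course implies \cref{prop:state}. So ``the paper's own proof'' here is really the proof of \cref{thm:sharp-transition}, and it is worth comparing your sketch to that.

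Your first two steps are on target and match the paper: Markov on $\EE[Z_K]$ for the supercritical side, and the second moment/Paley--Zygmund computation (\cref{thm:2mm-APZ}) for a constant lower bound on the subcritical side. The gap is in your boosting step. The Friedgut route you propose first would at best produce a sharp threshold \emph{sequence} at some $\alpha_c(n)$; it does not, by itself, pin the location at the first-moment value $\alpha_c(K)=\log 2/\log(1/p_K)$, which is the actual content of the proposition. Your second route --- planted model plus contiguity via a conditional second moment --- is also not what either \cite{PX} or \cite{ALS1} do. Abbe--Li--Sly use a dense-graph analogue of small subgraph conditioning (cycle counts explain the excess variance of $Z_K$), while Perkins--Xu, and this paper, use Talagrand's ``rigorous cavity method'': reveal rows one at a time, write $Q_t:=\log(|S_t|/\EE|S_t|)=\sum_i\log(1+Y_i)$ with $Y_i$ a martingale difference, and show by induction that (i) a geometric \emph{regularity} property of the solution set $S_t$ forces $Y_{t+1}$ to be small, (ii) martingale concentration then controls $Q_{t+1}$, and (iii) small $Q_{t+1}$ restores regularity of $S_{t+1}$. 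The regularity notion --- that two uniform solutions have overlap $O(\sqrt{n\log n})$ with high probability --- is the key idea you are missing; without it there is no handle on the increments $Y_i$ and the martingale argument cannot start.
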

No further quantification has been established or even conjectured. Our main contribution is to narrow the current bound on the critical window from $o(n)$ to $\cO{\log(n)}$. Strikingly, the list of CSP for which the critical window is even known to be polynomially smaller than $n$, let alone constant, is very short. This is discussed further in \cref{section:prev-works} below. We begin with a simplified, qualitative statement of our main result:

\begin{theorem}[\textbf{Main result:} logarithmic critical window]\label{thm:window} 
    For any positive constants $K$ and $\delta$, there exists an explicit constant $\ac \defeq  \ac(K)$ and some sufficiently large $c > 0$ so that for $A$ drawn from the $(\alpha,n)$-Gaussian ensemble and $n$ sufficiently large, 
    \begin{align*}
        \alpha n < \ac n - c\log(n) &\implies \PP{\disc(A) > K} < \delta\,, \\
        \alpha n > \ac n + c &\implies \PP{\disc(A) < K} < \delta\,.
    \end{align*}
    Equivalently, the capacity of the symmetric perceptron satisfies
    \[
        \PP{\ac n - c < \cp < \ac n + c \log(n)} > 1 - 2\del
    \]
\end{theorem}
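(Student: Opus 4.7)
The theorem contains two asymmetric claims which we treat separately.

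The unsatisfiability half (upper bound on $\bm{\alpha^*}$) is a direct first moment estimate. Let $Z_m$ count satisfying $\sigma \in \cb{\pm 1}^n$ when the first $m$ rows are imposed. Since the rows are iid and $\innerprod{a_1}{\sigma}$ is standard Gaussian for every $\sigma \in \cb{\pm 1}^n$, we have $\EE[Z_m] = 2^n p_K^m$ with $p_K := \PP{\ba{N(0,1)} \le K}$. The explicit threshold in \cref{prop:state} is exactly $\alpha_c = \log 2/\log(1/p_K)$, the point at which this annealed count equals $1$; hence $\EE[Z_{\alpha_c n + c}] = p_K^c$. Choosing $c$ with $p_K^c < \delta$ and applying Markov's inequality to the non-negative integer variable $Z_m$ yields
\[
\PP{\disc(A) < K} \le \PP{Z_m \ge 1} \le \EE[Z_m] = p_K^c < \delta\,.
\]
This is essentially tight and explains the $\comega{1}$ gap on the unsatisfiability side of the abstract.

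The satisfiability half is the hard direction because the plain second moment fails on $Z_m$: the pair-overlap integral has exponential-in-$n$ contributions from non-diagonal overlaps. The plan is to strengthen the refined moment arguments of \cite{PX,ALS1} into a quantitative statement at scale $c\log n / n$. First, construct a modified counter $\tilde Z_m \le Z_m$ restricted to ``geometrically typical" solutions via the small-subgraph / cluster conditioning of \cite{ALS1}, and check that at $m = \alpha_c n - c \log n$,
\[
\EE[\tilde Z_m] = (1 - \oo(1))\, 2^n p_K^m = (1 - \oo(1))\, n^{c \log(1/p_K)}\,,
\]
which is polynomially large. Second, upgrade the second moment estimate to $\EE[\tilde Z_m^2] \le \EE[\tilde Z_m]^2 \cdot \left(1 + \cO{n^{-\gamma}}\right)$ for a fixed $\gamma > 0$, by redoing the Laplace-method analysis of the overlap integral over $q = \innerprod{\sigma}{\tau}/n$ while tracking subleading polynomial corrections rather than just a $1+\oo(1)$ ratio. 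Third, conclude by Paley--Zygmund that
\[
\PP{Z_m \ge 1} \ge \PP{\tilde Z_m \ge 1} \ge \frac{\EE[\tilde Z_m]^2}{\EE[\tilde Z_m^2]} \ge 1 - \cO{n^{-\gamma}} \ge 1 - \delta
\]
for $n$ sufficiently large.

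The main obstacle is the quantitative second moment. The sharp-threshold proofs need only convergence of the ratio to $1$; quantifying it as $1 + n^{-\Omega(1)}$ requires explicit control of both the Hessian contribution at the diagonal saddle $\sigma = \pm\tau$ of the overlap integral, and a uniform exponential separation bound on the non-diagonal critical points (including those planted by the conditioning). A secondary technical point is handling the dependence on $c$ uniformly: one wants the rate $\gamma$ and implicit constants to be independent of $c$ so that $c$ can be chosen freely in terms of $\delta$. Once these polynomial rates are in hand, combining the two directions gives the probability bounds for $\disc(A)$, and the equivalent statement for $\bm{\alpha^*}$ follows from the elementary reduction between discrepancy and capacity flagged in the introduction.
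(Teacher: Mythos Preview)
Your unsatisfiability direction is correct and matches the paper exactly (\cref{lemma:lower-endpt}): Markov on the annealed count $2^n p_K^m$ gives $\PP{\disc(A) < K} \le p_K^{c}$ once $m = \alpha_c n + c$.

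For the satisfiability direction you take a genuinely different route. You propose a conditioned second-moment argument in the style of \cite{ALS1}; the paper instead refines the martingale\,/\,cavity method of Perkins--Xu \cite{PX}. It reveals rows one at a time, writes $Q_t = \log\big(|S_t|/\EE|S_t|\big)$ as a telescoping sum of increments $\log(1+Y_i)$, and shows via a regularity induction (\cref{lemma:Y-improved,lemma:Q-improved,lemma:tau-improved}) that $|Q_t| \le C_q x\log n$ with probability $1 - n^{-cx}$ for all $t \le \alpha_c n - x\log n$. This buys the sub-exponential tails of \cref{thm:main}, which a Paley--Zygmund conclusion would not. The paper explicitly remarks that the \cite{ALS1} approach ``seems likely'' to do even better---a constant window---but ``does not easily lead to tail bounds,'' so your route is plausible in principle, with different tradeoffs.

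That said, your description of the conditioning step has a genuine gap. Small-subgraph conditioning is not a restriction to a subset $\tilde Z_m \le Z_m$ of ``typical'' solutions; it conditions the \emph{probability space} on auxiliary statistics (cycle counts, or their dense-graph analogue in \cite{ALS1}) and shows the \emph{conditional} second-moment ratio tends to $1$. The unconditional ratio $\EE[Z_m^2]/\EE[Z_m]^2$ is bounded strictly away from $1$---this is exactly \cref{thm:2mm-main}, and the limiting constant is the second moment of the log-normal established in \cite{ALS1}---so no Laplace refinement of the overlap integral by itself can produce $1 + \cO{n^{-\gamma}}$. You must first identify and condition away the source of that order-one excess variance, and only then attempt to quantify the remaining $1+\oo(1)$ as polynomial; that quantification is precisely the nontrivial step neither \cite{PX} nor \cite{ALS1} carried out.
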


We actually prove a significant quantitative strengthening of this. Our main technical contribution gives sub-exponential tails on the fluctuations of discrepancy and directly implies \cref{thm:window}. 

\begin{theorem}[\textbf{Fluctuations}: capacity]\label{thm:main}
    Fix $K > 0$, and let $\cp$ be the storage capacity of the corresponding SBP. There exist positive constants $c$ and $\ac := \ac(K)$ depending only on $K$ so that for any sufficiently large fixed $x$, the following holds for all sufficiently large $n$:
    \[
        \PP{\ba{\cp - \ac} > \frac{x \log (n)}{n}} \le \cE{-c x\log\pa{n}}\,.
    \]
    In addition, there exist some other positive constants $c$ and $C$ depending only on $K$ so that for all $n$ sufficiently large,
    \[
        \frac{c}{n} \le \mathrm{Var}(\cp)^{1/2} \le \frac{C}{n}\log (n) \,.
    \]
\end{theorem}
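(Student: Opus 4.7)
The plan is to prove the two tails separately, exploiting the asymmetry that the upper tail admits a direct first-moment argument while the lower tail requires a restricted second moment bolstered by a concentration estimate.

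\textbf{Upper tail.} For the count $Z_m := |\cb{\sigma \in \cb{\pm 1}^n : \norm{A_{[m]}\sigma}_\infty \le K}|$, independence across rows gives $\EE[Z_m] = 2^n p^m$ with $p := \PP{|g| \le K}$ for $g \sim N(0,1)$, since each $\innerprod{A_i}{\sigma} \sim N(0,1)$. The threshold identified by PX/ALS coincides with the first-moment threshold $\alpha_c = \log 2/\log(1/p)$, so $\EE[Z_{\alpha_c n}] = \Theta(1)$. Markov's inequality then yields, for $m = \alpha_c n + x\log n$,
\[
\PP{\bm{\alpha^*} \ge m/n} = \PP{Z_m \ge 1} \le \EE[Z_m] = \Theta\pa{p^{x \log n}} = \Theta\pa{n^{-x\log(1/p)}},
\]
which is the desired upper tail with $c = \log(1/p)$. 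This direction uses nothing beyond the first-moment threshold already established in \cite{PX,ALS1}.

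\textbf{Lower tail, restricted second moment.} The naive second moment on $Z_m$ fails because pairs with overlap $\ne 0$ dominate $\EE[Z_m^2]$. I would follow the PX/ALS strategy of passing to a restricted count $\tilde Z_m \le Z_m$ of ``typical'' solutions---e.g., those whose empirical profile of margins $\br{\innerprod{A_i}{\sigma}}_{i}$ lies close to the planted-model marginal. On this restricted set the near-orthogonal pairs dominate and one obtains a tight ratio $\EE[\tilde Z_m^2] \le n^{C_0} \EE[\tilde Z_m]^2$ for some explicit $C_0 = C_0(K)$. Since removing a constraint inflates $\EE[Z]$ by the factor $1/p$, and the typicality restriction costs only a polynomial factor in $n$, one finds $\EE[\tilde Z_m] \ge n^{cx}$ at $m = \alpha_c n - x\log n$ once $x$ is large, whereupon Paley--Zygmund yields $\PP{\tilde Z_m > 0} \gtrsim n^{cx - C_0}$, exceeding any constant.

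\textbf{Boosting via concentration, and the variance bounds.} To upgrade the constant lower bound on $\PP{\tilde Z_m > 0}$ into a $1 - n^{-cx}$ bound, I would apply a Gaussian concentration inequality to $\sqrt{\tilde Z_m}$ viewed as a function of the Gaussian matrix $A$. After smoothing the constraint indicators by a mollifier at scale $o(1)$, the resulting smoothed count is $O(1)$-Lipschitz in Frobenius norm, and Borell--TIS gives subgaussian deviations at constant scale around its median. Since Paley--Zygmund lower-bounds the median by $n^{cx/2}$, the probability that this square root drops to zero is at most $e^{-\Omega(n^{cx})} \ll n^{-cx}$. The principal obstacle is precisely making this step rigorous: the discrete nature of the indicator constraints forces one either to carefully quantify a mollification error, or else to replace Gaussian isoperimetry by a Doob martingale over the rows of $A$ with bounded increments, with the delicate part being the control of a few large increments corresponding to rows that drastically reduce the surviving solution set. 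Once this concentration step is in place, the claimed tail estimate follows, and the variance bounds are direct consequences: the upper bound $\cO{\log(n)/n}$ comes from integrating the polynomial tail, and the lower bound $\Omega(1/n)$ from the observation that $\bm{\alpha^*}$ is supported on the lattice $\ZZ/n$ and does not concentrate on any single atom.
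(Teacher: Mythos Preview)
Your upper-tail argument via the first moment is correct and matches the paper exactly (\cref{lemma:lower-endpt}). The remainder has genuine gaps.

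\emph{Paley--Zygmund step.} With $\EE[\tilde Z_m^2]\le n^{C_0}(\EE[\tilde Z_m])^2$, Paley--Zygmund gives only $\PP{\tilde Z_m>0}\ge n^{-C_0}$; the factor $n^{cx}$ from the mean cancels in the ratio, so ``$\gtrsim n^{cx-C_0}$, exceeding any constant'' is an algebra error (probabilities are at most $1$). Consequently you cannot conclude that the median of $\sqrt{\tilde Z_m}$ is $n^{cx/2}$: a constant (or polynomially small) lower bound on $\PP{\tilde Z_m>0}$ says nothing about the median. As an aside, the paper shows in \cref{thm:2mm} that the \emph{unrestricted} second-moment ratio is $O(1)$ even at $\alpha=\alpha_c$, so no restriction to typical solutions is needed---but the resulting bound is still only a constant.

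\emph{Boosting step.} The claim that the mollified $\sqrt{\tilde Z_m}$ is $O(1)$-Lipschitz in Frobenius norm is not correct. A unit perturbation of a single entry $A_{ij}$ shifts the margin $(A\sigma)_i$ by $\pm 1$ for \emph{every} $\sigma$, so the change in $\tilde Z_m$ is generically proportional to $\tilde Z_m$ itself, and $\sqrt{\tilde Z_m}$ has Lipschitz constant growing with $n$. (Contrast the SK free energy, where soft constraints make $n^{-1}\log Z$ genuinely Lipschitz; here the constraints are hard and $\log Z$ can be $-\infty$.) Any sharper gradient bound would require knowing that solutions have small pairwise overlap---but establishing that is itself the crux. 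Your fallback, a Doob martingale over rows, \emph{is} the paper's route, and the ``delicate part'' you wave at is the whole of \cref{section:UB-ST}: one inductively maintains that the solution set $S_t$ is \emph{regular} (random pairs of solutions have overlap $O(\sqrt{n\log n})$), which via \cref{lemma:Y-improved} controls the increment $Y_{t+1}=p^{-1}(|S_{t+1}|/|S_t|-p)$, which via an MGF argument (\cref{lemma:Q-improved}) bounds $Q_t=\log(|S_t|/\EE|S_t|)$, which via a second Markov argument (\cref{lemma:tau-improved}) re-establishes regularity of $S_{t+1}$. Running this loop all the way to $m=\alpha_c n - x\log n$ is precisely what requires the second-moment-at-criticality result \cref{thm:2mm}.

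\emph{Variance lower bound.} That $\bm{\alpha^*}\in n^{-1}\ZZ$ does reduce the claim to $\max_m\PP{\bm{\alpha^*}n=m}\le 1-\delta$, but you have not proved this: neither Markov nor the second-moment result excludes concentration on the single atom $\lceil\alpha_c n\rceil$. The paper supplies the missing anticoncentration via \cref{lemma:anticonc}, which bounds $\PP{\disc(A)\in[K-\eps/n,K]}=O(\eps)$ by controlling the expected number of solutions with at least one tight constraint.
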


\cref{thm:main} implies \cref{thm:window} as an immediate corollary, as well as an analogous bound on discrepancy:

\begin{theorem}[\textbf{Fluctuations}: discrepancy]\label{thm:main-disc}
    Fix $\alpha > 0$ and let $A$ be from the $(\alpha, n)$-Gaussian ensemble. There exist positive constants $c$ and $\kc := \kc(\alpha)$ depending only on $\alpha$ so that for any sufficiently large fixed $x$, the following holds for all sufficiently large $n$:
    \begin{equation}\label{eq:main-tail}
        \PP{\ba{\disc(A) - \kc} > \frac{x \log (n)}{n}} \le \cE{-c x\log\pa{n}} \,.
    \end{equation}
    In addition, there exist some other positive constants $c$ and $C$ depending only on $\alpha$ so that for all $n$ sufficiently large,
    \begin{equation}\label{eq:main-var}
        \frac{c}{n} \le \mathrm{Var}(\disc(A))^{1/2} \le \frac{C}{n}\log (n) \,.
    \end{equation}
\end{theorem}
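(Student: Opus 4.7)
The plan is to deduce \cref{thm:main-disc} from \cref{thm:main} via the exact duality between the discrepancy and capacity formulations, alluded to above as \cref{lemma:alpha-K}. Couple $A$ with an iid Gaussian sequence $\{a_i\}_{i\in\NN}$ so that the rows of $A$ are the first $\alpha n$ entries of the sequence. Then for each $K$, the events
\[
    \cb{\disc(A) \le K} \quad\text{and}\quad \cb{\bm{\alpha^*}(K) \ge \alpha}
\]
are \emph{identical}, because both are the existence of a single $\sigma \in \cb{\pm 1}^n$ satisfying $\max_{i \le \alpha n} \ba{\innerprod{a_i}{\sigma}} \le K$. Any deviation bound therefore transfers through this identity with no loss.

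First, I would verify that $K \mapsto \alpha_c(K)$ is smooth and strictly increasing on a neighbourhood of $K_c(\alpha)$, with derivative bounded above and bounded away from zero. This follows from the explicit variational formula for $\alpha_c$ derived in \cite{PX,ALS1} by an elementary computation. The conclusion is the existence of constants $0 < c_- \le c_+ < \infty$ depending only on $\alpha$ such that, for $|\epsilon|$ sufficiently small,
\[
    c_- |\epsilon| \;\le\; \ba{\alpha_c(K_c(\alpha) + \epsilon) - \alpha} \;\le\; c_+ |\epsilon|\,.
\]

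Given this regularity, the tail bound \eqref{eq:main-tail} is essentially a one-line consequence. Setting $\epsilon = x \log(n)/n$ and $K^+ = K_c(\alpha) + \epsilon$, the event identity gives
\[
    \PP{\disc(A) > K_c(\alpha) + \epsilon} \;=\; \PP{\bm{\alpha^*}(K^+) < \alpha} \;\le\; \PP{\bm{\alpha^*}(K^+) \le \alpha_c(K^+) - c_- \epsilon}\,,
\]
which is at most $\cE{-c' x \log n}$ for some $c' > 0$ by \cref{thm:main}, after absorbing $c_-$ into the rate. The symmetric argument with $K^- = K_c(\alpha) - \epsilon$ handles the lower tail.

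For the variance bounds \eqref{eq:main-var}: the upper bound follows by integrating the sub-exponential tail, combined with a crude a priori bound such as $\disc(A) \le \cO{\sqrt{n\log n}}$ with overwhelming probability (via a union bound on $\ba{\innerprod{a_i}{\sigma}}$) to control the far-tail contribution. The lower bound inherits from $\mathrm{Var}(\bm{\alpha^*})^{1/2} \ge c/n$ in \cref{thm:main}: by the event identity combined with the upper Lipschitz bound on $\alpha_c$, any two-point spread of $\bm{\alpha^*}$ on scale $1/n$ corresponds to a spread of $\disc(A)$ on scale at least $c_+^{-1}/n$, yielding $\mathrm{Var}(\disc(A)) \ge c''/n^2$. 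The main obstacle is the regularity check on $\alpha_c(K)$; all remaining steps are direct translations under the event identity, and since the formula for $\alpha_c$ is analytic the verification should be routine.
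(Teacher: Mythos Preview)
Your reduction strategy is the reverse of the paper's: the paper proves the discrepancy bounds directly—the lower tail by the first moment (\cref{lemma:lower-endpt}), the upper tail by \cref{thm:sharp-transition}, and the fluctuation lower bound by the anticoncentration estimate \cref{lemma:anticonc} combined with \cref{thm:2mm}—and the capacity statement \cref{thm:main} is a parallel consequence of the same lemmas via \cref{lemma:alpha-K}. Your route is still workable for the tail bound \eqref{eq:main-tail} and the variance upper bound, modulo one technicality: you invoke \cref{thm:main} at $K^\pm = K_c(\alpha) \pm x\log(n)/n$, which varies with $n$, whereas \cref{thm:main} is stated for fixed $K$. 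You would need its constants to be locally uniform in $K$; this is true from the proof but must be said.

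The variance lower bound, however, has a genuine gap. The random variables $\bm{\alpha^*}(K_0)$ and $\disc(A)$ are linked only through the event identity $\{\disc(A) \le K\} = \{\bm{\alpha^*}(K) \ge \alpha\}$; neither is a function of the other, so variances do not transfer by a Lipschitz change of variables as you suggest. Concretely, $\mathrm{Var}(\bm{\alpha^*}(K_0)) \ge c^2/n^2$ is consistent with $\disc(A)$ being concentrated on an interval of width $o(1/n)$: the spread of $\bm{\alpha^*}(K_0)$ could come entirely from one side of $\alpha$, which under the event identity only tells you that $\PP{\disc(A) \le K_0}$ is bounded away from $0$ and $1$, not that $\disc(A)$ itself is spread on scale $1/n$. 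The paper supplies the missing ingredient—an anticoncentration bound $\PP{\disc(A) \in [K - \eps/n, K]} \le C\eps\,\E{|Z_K|}$ (\cref{lemma:anticonc})—and this, not the variance of $\bm{\alpha^*}$, is what drives the lower bound in \eqref{eq:main-var}.
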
 
Some remarks:
\begin{enumerate}
    \item Existing general tools fall quite short of being able to capture when the critical window of a CSP has width polynomially smaller than $n$---let alone a window of constant width. See e.g. a result of Xu giving a sharp threshold for the (asymmetric) perceptron \cite{xu} based on Hatami's theorem \cite{hatami}, which establishes a sharp threshold sequence of width at most $n /\pa{\log(\log(n))}^{1/10}$.
    \item It is natural to compare discrepancy to the smallest singular value of a Gaussian matrix, since they solve somewhat similar minimization problems. \cref{thm:main} shows discrepancy is significantly more concentrated, with fluctuations of order $n^{-1}$, whereas the smallest singular value of a Gaussian matrix follows the Tracy-Widom law with fluctuations of order $n^{-2/3}$ \cite{RV-ssv}.
    \item Our lower-bound on the variance shows that the critical window is at least constant. C.f. random linear equations over a finite field, which can be written as a CSP with a degenerate window \cite{sandpile,dreg}. By ``degenerate'', we mean that the critical window has size exactly one, as well as a deterministic location; this corresponds to random (regular) equations being solvable w.h.p. exactly until the number of equations $\alpha n$ exceeds the number of variables $n$. 
\end{enumerate}

We conclude by highlighting a key intermediate result. Let us briefly introduce a standard perspective for studying CSP: counting solutions. Fix $\alpha$ and define $Z_K \defeq  Z_{K,\alpha}(A)$ to be the set of solutions incurring discrepancy at most $K$.
\[
    Z_{K} \defeq  \cb{x \in \cb{\pm 1}^n:~\|Ax\|_\infty \le K}, \quad \disc(A) = \inf\{K:~Z_K \neq \emptyset\} \,.
\]
A natural prediction is that $\disc(A)$ should concentrate around $\kc \defeq  \kc(\alpha)$, defined as the smallest $K$ for which $\E{|Z_K|} \ge 1$. Showing $|Z_K|$ concentrates around its mean would verify this prediction. Markov's inequality readily implies $\disc(A) > (\kc -\eps)$ with exponentially high probability, for any $\eps > 0$.  The coarse threshold of Aubin, Perkins, and Zdeborov\'a \cite{APZ}, which states $\disc(A) < (\kc + \eps)$ with positive probability for any $\eps > 0$, corresponds to a variance bound:
\begin{equation}\label{eq:2mm-loose}
    \forall \eps > 0, \quad 1 < c \le \frac{\E{|Z_{K_c + \eps}|^2}}{\E{|Z_{K_c + \eps}|}^2} \le C < \infty\,.
\end{equation}
While \cref{eq:2mm-loose} implies that the variance of $|Z_K|$ is small enough to obtain a coarse threshold, it also implies the variance of $|Z_K|$ is too large to obtain a sharp threshold. Rather, it is $\log |Z_K|$ that concentrates well. We will soon check that for any small positive constant $\eps$, $\log \E{|Z_{\kc + \eps}|} = \cT{\eps n}$. The sharp threshold \cref{prop:state} of \cite{PX,ALS1} corresponds to the asymptotic bound that for any positive $\delta$, there exists large $M$ with
\begin{equation}\label{eq:loose}
    \lim_{n \to \infty} \PP{ \ba{ \log|Z_{\kc + \eps}| - \log \E{|Z_{\kc + \eps}|} } > M \log(n)} \le \delta\,.
\end{equation}
Our contribution is to give a non-asymptotic sharpening of \cref{eq:loose} in which $\eps$ and $\delta$ have explicit, near-optimal dependence on $n$. As a necessary intermediate step, we show a significant strengthening of \cref{eq:2mm-loose}. Namely, the second moment method yields a non-trivial bound even \textit{exactly} down to the critical threshold $\kc$. This is new and perhaps surprising.

\begin{theorem}[Coarse threshold, up to criticality]\label{thm:2mm-main} 
    Fix any $\alpha >0$ and let $\kc := \kc(\alpha)$. There exists $C\defeq  C(\alpha) >0$ so that for all $n$ sufficiently large,
    \begin{equation}
    \frac{\E{|Z_{K_c,\alpha}|^2}}{\E{|Z_{K_c,\alpha}|}^2} \ge C > 0\,.
    \end{equation}
    Consequently,
    \begin{equation}
        \PP{\disc(A) \le \kc} > C \,.
    \end{equation}
\end{theorem}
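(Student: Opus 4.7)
The plan is to derive both conclusions from a direct application of the Paley--Zygmund inequality to $|Z_{\kc}|$. Since $\disc(A) \le \kc$ iff $|Z_\kc| \ge 1$, a uniform lower bound on $\E{|Z_\kc|}^2/\E{|Z_\kc|^2}$ simultaneously yields the positive-probability statement and (by definition of $\kc$, which normalizes $\E{|Z_\kc|} = 1$) the second moment estimate. A standard computation gives the first moment $\E{|Z_K|} = 2^n p_K^{\alpha n}$ with $p_K := \PP{|Z|\le K}$ for $Z\sim N(0,1)$. Conditioning on the overlap $\rho := \innerprod{\sigma}{\tau}/n$, the second moment ratio becomes
\[
    R_n := \frac{\E{|Z_K|^2}}{\E{|Z_K|}^2} = \EE_\rho \br{f_K(\rho)^{\alpha n}}, \qquad f_K(\rho) := \frac{\PP{|Z_1|\le K,~|Z_2|\le K}}{p_K^2},
\]
where $(Z_1,Z_2)$ is bivariate standard normal with correlation $\rho$ and $\rho$ is distributed as the normalized endpoint of a length-$n$ Rademacher walk. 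The task reduces to showing $R_n = \cO{1}$.

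By the symmetry $q_K(-\rho) = q_K(\rho)$ obtained from $Z_2 \mapsto -Z_2$, $f_K$ is even, so Taylor expansion gives $f_K(\rho) = 1 + c_K \rho^2 + \cO{\rho^4}$ with $c_K := q_K''(0)/(2p_K^2) > 0$. I would split the integral at a fixed scale $|\rho| \le \eta$. On the bulk, the local central limit theorem approximates $\rho$ by $N(0,1/n)$ and the integrand by $\cE{c_K \alpha n \rho^2}$; the resulting Gaussian integral evaluates to $(1-2c_K\alpha)^{-1/2} = \cO{1}$, provided $2c_K\alpha < 1$. On the tail $|\rho| > \eta$, Stirling gives $2^{-n}\binom{n}{n(1+\rho)/2} \le \exp\cb{n\br{H((1+\rho)/2) - \ln 2}}$ (with $H$ the Shannon entropy), reducing matters to showing that the replica-symmetric exponent
\[
    \Phi(\rho) \;:=\; H((1+\rho)/2) - \ln 2 + \alpha \ln f_K(\rho)
\]
satisfies $\Phi(\rho) \le -\eta'$ on $|\rho| > \eta$ for some $\eta' > 0$. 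Summing the bulk and tail estimates yields the desired $\cO{1}$ bound.

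The main obstacle is to verify these two quantitative conditions---$2 c_K \alpha < 1$ at $\rho=0$ and tail-negativity of $\Phi$---at the \emph{exact} critical value $K = \kc$. For $K$ strictly above $\kc$ these are essentially the second moment estimates of \cite{APZ} refined by the sharp-threshold analyses of \cite{PX, ALS1}, but strict inequality at the boundary is not automatic. My plan is to extend by continuity: $f_K(\rho)$ is jointly smooth in $(K,\rho)$ on a neighborhood of $\cb{\kc} \times [-1,1]$ with uniformly controlled derivatives, so the conditions at $K = \kc$ follow from their analogues at $K = \kc + \delta$ via $\delta \downarrow 0$, provided the constants obtained there do not degenerate. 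Crucially, the bound $R_n = \cO{1}$ is governed only by structural features of $f_\kc$---evenness, strict positivity, and unimodality in $|\rho|$---and not by the magnitude of $\E{|Z_K|}$, so no degeneracy is expected. Once $R_n = \cO{1}$ is obtained, Paley--Zygmund immediately gives $\PP{|Z_\kc| \ge 1} \ge 1/R_n = \cOm{1}$, completing the proof.
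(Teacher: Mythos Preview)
Your bulk estimate is fine and matches the paper's region $I_1$; the condition $2c_K\alpha < 1$ is exactly the paper's $F''(1/2) < 0$, which holds strictly at $\alpha = \alpha_c$ by \cref{lemma:free-energy}. The gap is in the tail. At $K = \kc$ and $\alpha = \alpha_c$ one has $\Phi(\pm 1) = -\ln 2 - \alpha_c \ln p_{\kc} = 0$ by the defining relation $\alpha_c \ln p_{\kc} = -\ln 2$. Hence your claim that $\Phi(\rho) \le -\eta'$ on $\{|\rho| > \eta\}$ is simply false at criticality: the replica-symmetric exponent touches zero at the boundary. Your continuity argument from $K = \kc + \delta$ cannot repair this, because the tail constant $\eta'$ you would extract from the $\delta > 0$ case is governed by $\Phi(1) = -\ln 2 - \alpha \ln p_{\kc + \delta}$, which vanishes as $\delta \downarrow 0$; the constants \emph{do} degenerate, contrary to what you assert.

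What is actually happening at criticality is that the single diagonal term $\rho = 1$ (i.e., $\sigma = \tau$) contributes exactly $2^{-n} p_{\kc}^{-\alpha_c n} = 1$ to the ratio, and the nontrivial work is to show the remaining near-boundary terms are negligible even though $\Phi$ is close to zero there. The paper does this by isolating a third region $\beta \in [0,\delta]$ (equivalently $|\rho|$ near $1$) and proving the non-obvious estimate $q(1/n)/q(0) \le 1 - \cOm{n^{-1/2}}$, which encodes the ``frozen'' structure of solutions (the fact that $F'(\beta)\to -\infty$ as $\beta \to 0^+$). Raised to the $\alpha n$-th power this gives a factor $\cE{-\cOm{\sqrt n}}$, swamping the entropy $\binom{n}{1}$ and all subsequent terms in that region, so $I_3 = 1 + o(1)$. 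You need an analogous boundary analysis; a two-region split with a uniform exponent bound on the tail cannot succeed at the exact critical point.
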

Since the second moment method can be carried out all the way to criticality, it is natural to suspect similar improvements are possible in \cref{eq:loose}. Indeed, this is the content of \cref{thm:main-disc}. We choose to refine the techniques of \cite{PX} because they naturally lead to strong tail bounds, albeit at the cost of an extra logarithm for the width of the critical window. It seems likely that the techniques of \cite{ALS1} can be used to show $\disc(A) = \kc + \cT{1/n}$, resolving this extra log. However, the techniques of \cite{ALS1} are asymptotic and do not easily lead to tail bounds. 

Simultaneously providing strong tail bounds and capturing the exact size of fluctuations is left as an open question; we conjecture that our main result can be improved both by removing the logarithmic mismatch in \cref{eq:main-var}, and also by giving subgaussian rather than subexponential tails in \cref{eq:main-tail}. 
\begin{conjecture}
    $|\disc(A)-\kc|$ is $1/n^2$-subgaussian. 
\end{conjecture}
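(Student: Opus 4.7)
The plan is to treat the upper and lower tails of $\disc(A) - \kc$ separately, since they appear to be governed by quite different concentration mechanisms.

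For the upper tail $\PP{\disc(A) > \kc + t}$, the strategy leverages \cref{thm:2mm-main}: since the second moment ratio is $\cO{1}$ exactly at criticality, one expects that $|Z_\kc|$ converges in distribution to a constant multiple of $\Pois(\mu)$, mirroring the small-subgraph-conditioning paradigm for sparse random CSPs. Shifting to $K = \kc + t$ would rescale $\mu$ by $\cE{c n t}$, so that
\[
    \PP{\disc(A) > \kc + t} \;=\; \PP{|Z_{\kc + t}| = 0} \;\approx\; \cE{-\cE{c n t}},
\]
dramatically stronger than the sub-Gaussian bound. The rigorous step is to compute the higher factorial moments $\E{|Z_\kc|\pa{|Z_\kc|-1}\cdots\pa{|Z_\kc|-k+1}}$ to all orders and to identify the ``defect'' row-patterns in $A$ whose joint atypical statistics seed the Poisson limit — a plausible extension of the second-moment analysis underlying \cref{thm:2mm-main}.

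For the lower tail $\PP{\disc(A) < \kc - t}$, the first moment gives $\PP{|Z_{\kc - t}| \ge 1} \le \E{|Z_{\kc - t}|} \asymp \cE{-c t n}$. At scale $t = x/n$, this is $\cE{-c x}$: subexponential but not sub-Gaussian. This side is the \emph{main obstacle}. At this scale the diagonal contribution ($\sigma = \sigma'$) to $\E{|Z_{\kc - t}|^2}$ dominates, so Paley-Zygmund returns a matching lower bound $\PP{|Z_{\kc - t}| \ge 1} \gtrsim \cE{-c x}$, suggesting that the first-moment bound is essentially tight to leading exponential order. A genuine sub-Gaussian improvement on this side would thus require a mechanism beyond the moment method; promising avenues include a small-ball estimate on the location of $\disc(A)$ exploiting absolute continuity of the Gaussian entries, or a quantitative upgrade of the interpolation framework of \cite{ALS1} to sub-Gaussian precision.

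It is also conceivable that the conjecture should be read in a weaker bulk sense — sub-Gaussian concentration holding only for $t = \cO{1/n}$, with subexponential tails further out — which is consistent with the variance scaling established in \cref{thm:main-disc}. Under this weaker interpretation, the natural attack is to combine log-Sobolev or hypercontractive estimates applied to a smoothed partition function (replacing the hard indicator $\ind\cb{\|A\sigma\|_\infty \le K}$ with a soft Gaussian weight) with the upper-tail Poissonization sketched above, yielding sub-Gaussian tails in the window $t = \cO{\log n / n}$ while preserving subexponential decay beyond.
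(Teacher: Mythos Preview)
The statement you are addressing is an open \emph{conjecture} in the paper, not a theorem: the paper offers no proof, only the heuristic that the logarithmic gap in \cref{eq:main-var} and the subexponential tails in \cref{eq:main-tail} should both be removable. There is consequently no argument in the paper to compare your attempt against, and your write-up is best read as a survey of possible attacks rather than a proof.

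That said, the lower-tail obstruction you isolate is not merely a gap in your strategy---it is evidence that the conjecture may be false as written. Your Paley--Zygmund heuristic can be made precise. A routine perturbation of the proof of \cref{thm:2mm} (only continuity of $F''(1/2)$ and of the free-energy landscape in $\alpha$ near $\alpha_c$ is needed) gives $\E{|Z_{\kc-x/n}|^2} \le C\bigl(\E{|Z_{\kc-x/n}|} + \E{|Z_{\kc-x/n}|}^2\bigr)$, with the diagonal $\beta=0$ term dominating once $\E{|Z|}\asymp e^{-cx}<1$. Paley--Zygmund then yields $\PP{\disc(A)\le \kc - x/n} \ge c'e^{-Cx}$ with constants uniform over, say, $1\le x\le \sqrt{n}$. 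For any fixed $x$ large enough this already contradicts the $1/n^2$-subgaussian bound $\PP{|\disc(A)-\kc|>x/n}\le 2e^{-cx^2}$. So unless this reasoning has a flaw, the lower tail of $\disc(A)-\kc$ is genuinely subexponential and the conjecture would need amendment (e.g.\ restricted to the upper tail, or to the bulk regime $x=\cO{1}$ you mention). On the upper tail your Poisson heuristic is unsubstantiated---the limit law for $|Z|$ identified in \cite{ALS1} is log-normal rather than Poisson---though a log-normal heuristic is actually \emph{more} compatible with sub-Gaussian decay of $\PP{\disc(A)>\kc+x/n}$ than a Poisson one would be.
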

\textit{Author's note: after the appearance of this work, Sah and Sawhney \cite{ss} showed the lower tails of discrepancy are actually sub-exponential. For applications in discrepancy theory, analysis of the upper tails is essential and remains open.}

\subsection{Notation} Throughout this work, we use $c$ and $C$ to denote positive universal constants that may take different values on different lines. Any important constants will be distinguished with subscripts. For functions (namely $p$, $q$, $F$, and $Z$, all defined in the following section) that depend on the parameters $n$, $K$, and $\alpha$, we frequently suppress some of these dependencies when there is no ambiguity, particularly when one of these parameters is held fixed. \\

\section{Previous Works}\label{section:prev-works}

\textbf{Critical windows of random CSP.} 
The previous sharp threshold results \cite{ALS1} and \cite{PX} on the symmetric perceptron correspond to a bound of $o(n)$ on the width of the critical window. In the literature of random CSP, it is rare to find optimal, or even non-trivial, improvements over $o(n)$ on the critical window. General tools from Fourier Analysis such as the sharp threshold sequence theorems of Friedgut and Bourgain \cite{fried} and Hatami \cite{hatami}, are the best known bounds for most CSP (the perceptron \cite{xu}, k-SAT \cite{SAT}, and k-coloring \cite{kcolor}, etc.). Even when applicable, the Friedgut-Bourgain sharp threshold theorem gives only the qualitative upper-bound $o(n)$, and Hatami's theorem gives a slightly improved $n/\log(\log(n))$. It is possible that many of these standard CSP should have critical windows {\em polynomially} smaller than $n$.  

We are only aware of two other CSP for which such an improvement has been shown:  ``constrained'' XOR-SAT \cite{XOR}, and 2-SAT \cite{2SAT}. Both enjoy very special structure: XOR-SAT is essentially a ``matrix invertibility'' problem, and its critical window is at most constant due to dimension-counting arguments. The 2-SAT model has critical window of size $\cT{n^{2/3}}$, and the analysis employs from a series of clever reductions to a well-studied percolation problem.

Let us also clarify the difference between our result and sharp threshold results for ``regular'' models, e.g. random regular NAE-SAT. A random regular constraint satisfaction problem on $n$ variables is a collection of clauses each involving $k$ variables, with each variable in $d$ clauses. The number of clauses must then be 
\[
nd/k\,.
\]
Ding, Sly, and Sun showed for random regular NAE-SAT \cite{NAE-SAT} that the largest value of $d$ for which there are solutions concentrates around an explicit constant, with at most constant fluctuations. Since a constant change in $d$ results in a change of order $n$ to the number of clauses, this kind of sharp threshold for $d$ seems closer to a qualitative sharp threshold rather than the type of critical window bound investigated here.

Finally, it is quite special that the symmetric perceptron enjoys such a small critical window: there are many sources of fluctuations that could preclude a CSP from having a constant-size critical window. For example, Wilson showed the critical window of the infamous K-SAT problem is at least $\cOm{\sqrt{n}}$. His argument proceeds by noting that the number of trivially satisfiable clauses is of order $n$ with CLT-like fluctuations of order $\cT{\sqrt{n}}$. Another example is the critical window of the binary perceptron, which is conjectured by Ding and Sun \cite{dingsun-perceptron} to be of order $\cT{\sqrt{n}}$ . They posit that such large fluctuations should occur due to fluctuations in the barycenter of the solution set.  \\

\textbf{Coarse threshold.}
The connection between discrepancy and the perceptron model was introduced by Aubin, Perkins and Zdeborov\'a in 2019 \cite{APZ}. They established a \textit{coarse threshold} via the a second moment method. More precisely, they showed---conditional on a numerical conjecture (see \cref{section:UB-1})---for any $\eps > 0$, that $\disc(A) > \kc-\eps$ with high probability and $\disc(A) < \kc-\eps$ with positive probability. Their work also establishes the negative result that the second moment method does not suffice to prove a sharp threshold, but they nonetheless conjectured a sharp threshold (i.e. the upper-bound should hold with high, rather than positive, probability) . 

This conjecture was proven by Abbe, Li, and Sly \cite{ALS1}, and Perkins and Xu \cite{PX} independently and simultaneously. \\

\textbf{Sharp threshold (cycle counting).} Abbe, Li, and Sly used a ``dense graph'' analogue of the so-called small subgraph conditioning technique of Robinson and Wormald \cite{rw}. They established both the sharp threshold as well as an explicit asymptotic description of the number of solutions. The small subgraph conditioning method is a celebrated technique to correct the failure of the second moment method for counting objects in sparse random graphs. The idea is to explain the extra variance by explicitly quantifying how the number of cycles in a graph affects this count. 

Interestingly, the techniques of \cite{ALS1} have a strong analogue in the literature of the SK (Sherrington-Kirkpatrick) model. A seminal paper of Aizenman, Lebowitz, and Ruelle \cite{aiz}---written in 1987, actually predating the appearance of the small subgraph conditioning technique---established the asymptotic log-normal limit for the fluctuations in the free energy of the SK model at high temperatures, also using cycle counts in a dense graph, with many of the same objects appearing. The connection between the techniques of \cite{aiz} and \cite{rw} were further investigated and made explicit in a recent work of Banerjee \cite{banerjee} that establishes a log-normal limit for a perturbation of the SK model. \\

\textbf{Sharp threshold (cavity method).} Perkins and Xu independently and simultaneously resolved the sharp threshold conjecture for the SBP with different techniques. They adapted and refined an argument of Talagrand \cite{talagrand1999intersecting} originally used for the binary perceptron. The technique is a ``rigorous cavity method'', in which martingale techniques are used to show concentration of the log-partition function as rows of the matrix are added. We discuss this method further in \cref{section:UB-ST}. 

Sun and Nakajima \cite{nakajima-sun} recently extended the method of Talagrand to give a sharp threshold {\em sequence} for wide class of random matrices, yielding comparable results to \cite{ALS1,PX} in a broader setting, notably including the binary perceptron. (A sharp threshold sequence is a weaker notion of a sharp threshold in which the capacity concentrates around some $\ac(n)$ for each $n$, but $\pa{\ac(n)}_{n=1}^\infty$ may not converge to some value that is independent of $n$.) \\

\textbf{Algorithms.} The foundational result of combinatorial discrepancy is Spencer's ``Six standard deviations suffice'' \cite{spencer1985six}, which asserts for \textit{any} square matrix $A$ with entries in $[-n^{-1/2},n^{1/2}]$, deterministically $\disc(A) \le 6$. Finding an efficient algorithm that achieves Spencer's bound took almost 25 years. While a number of such algorithms were discovered in the last decade, and while many of these easily generalize beyond bounded entries to our setting of iid random entries (e.g. \cite{int-feas}), they are all far from achieving the optimal constant $\kc$. There are intimate connections between average-case complexity and the geometry of the ``solution space'' for optimization problems that heavily motivated the recent work on sharp thresholds for the perceptron. We refer the interested reader to \cite{APZ,ALS1,ALS2, PX,gamarnik-sbp} for further discussion. \\

\textbf{Wide matrices.} The discrepancy of $\alpha n \times n$ matrices  with $\alpha \to 0$ has applications in a surprising range of combinatorial optimization problems. The case of matrices with iid Bernoulli$(p)$ entries is of particular interest due to practical reasons as well as connections with the long-standing Beck-Fiala conjecture. The second moment method readily gives a sharp threshold for Gaussian matrices \cite{turner2020balancing}. However for Bernoulli matrices, there are fundamental obstacles to analyzing the second moment if $p \defeq  p(n)$ vanishes quickly. A sharp characterization of discrepancy was first established for large values of $p$ \cite{hoberg-rothvoss,potukuchi2018discrepancy, bansal-meka,ezra-lovett,franks2020discrepancy, macrury2021phase}, and eventually for all $p$ \cite{dja-jnw}. 

\section{Preliminaries} \label{section:UB-1}

\subsection{First Moment} Let us adopt the language of random Constraint Satisfaction Problems (CSP's). For an $(\alpha,n)$-Gaussian matrix $A$, define the sublevel set $Z$ by:
\[
    Z_{K,\alpha} \defeq  \cb{x \in \cb{\pm 1}^n:~\|Ax\|_\infty \le K},
\]
so that
\[
    \disc(A) = \inf\{K:~Z_{K,\alpha} \neq \emptyset\} \,.
\]
Then $Z_{K,\alpha}$ is the set of solutions to the ``discrepancy instance'' with parameters $K$ and $\alpha$ given by the matrix $A$. The matrix $A$ encodes a set of constraints: for each row $a_i$, a solution $x$ needs to satisfy $|\innerprod{x}{a_i}| \le K$. It is natural to suspect that for fixed  $\alpha$, the probability of $Z_{K,\alpha}$ being empty undergoes a rapid transition from zero to one as $K$ increases. 

A standard first approach to understanding the value of an optimization problem is the first moment method. If $\E{|Z_{K,\alpha}|}$ is vanishing, then by Markov's inequality there are no solutions with high probability. This provides a lower bound on $\disc(A)$. For an $(\alpha,n)$-Gaussian matrix $A$, it is an elementary computation that for $W$ a standard normal,
\begin{align*}
    \frac{1}{n}\log \E{\ba{ Z_{K,\alpha} }} &= \log(2) +  \alpha \log p_K\,, \\
    p_K \defeq  \PP{|W| \le K} &=  \frac{1}{\sqrt{2\pi}}\int_{-K}^{K}e^{-w^2/2}dw\,.
\end{align*}
For fixed $K$, we define $\ac(K)$ the ``critical value'' of $\alpha$ so that the expected number of solutions is one: 
\begin{equation}\label{eq:ac-def}
    \ac(K) \defeq  -\frac{\log(2)}{\log p_K}\,.
\end{equation}
Conversely, for fixed $\alpha$, we define $\kc(\alpha)$ as the unique solution to
\[
    \ac(K_c) = \alpha\,.
\]
We end this section by showing that for $\alpha$ even slightly greater than $\alpha_c$---or equivalently, $K$ slightly smaller than $K_c$---the expected size of $Z_{K,\alpha}$ is exponentially smaller than one. Applying Markov's inequality yields an upper-bound on the capacity of the symmetric perceptron, completing the elementary half of \cref{thm:main}.

\begin{proposition}\label{prop:Z-expansion}
Consider an arbitrary fixed $\alpha > 0$ and let $\kc := \kc(\alpha)$. There exist positive constants $\eps$, $c$, and $C$ depending only on $\alpha$ so that for all $K > 0$ with $|K - \kc|< \eps$, we have:
\begin{equation} \label{eq:Z-expansion-K}
    c \le \frac{\frac{1}{n}\log\E{|Z_{K,\alpha}|}}{K - \kc} \le C\,.
\end{equation}  
\end{proposition}

\begin{proof}
    Let $\ac := \ac(K)$. Recall that $\E{|Z_{K,\alpha_c}|} = 1$ by construction of $\alpha_c$, so that 
    \begin{equation}\label{eq:Z-alpha}
        \E{|Z_{K,\alpha}|} = \pa{p_K} ^{n(\alpha-\ac)}\,, \quad \text{i.e.} \quad 
        \frac{1}{n}\log\E{|Z_{K,\alpha}|} = (\log p_K)(\alpha-\ac)\,.
    \end{equation}
    For fixed $\alpha > 0$, $\kc$ is bounded away from 0. For $\eps$ small enough, $K$ is bounded from 0 also. It follows that $p_K$ and $p_{\kc}$ are bounded from zero as well, so that $\log p_K =: c_K$ and the first two derivatives in $K$ of $\log p_K$ are bounded in absolute value by some constants depending only on $\alpha$. (We have eliminated dependence on $\eps$ by recall that $\eps$ is a constant depending only on $\alpha$). Thus, we have by Taylor's expansion:
\begin{equation}\label{eq:equiv-perturb}
        \alpha - \alpha_c = \log(2) \pa{\frac{1}{\log p_K} - \frac{1}{\log p_{\kc(\alpha)}} } = c \pa{\kc - K} +   \cO{|\kc - K|^2}\,.
    \end{equation}
     Combining with \cref{eq:Z-alpha} yields the result. 
\end{proof}

We will repeatedly use \cref{eq:equiv-perturb} to justify the idea that studying $\alpha$ close to $\alpha_c$ and $K$ close to $\kc$ are equivalent. It will be convenient extract a formal statement for later use. 

\begin{lemma}[Equivalence of perturbations to $\alpha$ and $K$]\label{lemma:alpha-K}
    Let $\alpha := \alpha(n)$ and $K := K(n)$ be two sequences strictly bounded away from zero. There exist positive constants $\eps$, $c$, and $C$ so that if 
    \[ \limsup_{n \to \infty}\max\pa{|K-\kc(\alpha)|,~|\alpha-\ac(K)|} < \eps\,,
    \]
    then for all $n$ sufficiently large:
    \[
        c \le \frac{\ac(K)-\alpha}{K - \kc(\alpha)} \le C\,.
    \]
    \end{lemma}

The following are immediate consequences of applying Markov's inequality to \cref{prop:Z-expansion} and then invoking \cref{eq:Z-alpha}.

\begin{lemma}[First moment method] \label{lemma:markov} Fix $K > 0$ and let $\ac := \ac(K)$. 
For any $y > 0$,
\begin{equation} \label{eq:capacity-upper-tail}
    \PP{\cp n \ge \ac n + y} \le (p_K)^y\,.
\end{equation}
Alternatively, fix $\alpha > 0$ and let $\kc := \kc(\alpha)$. There exist positive constants $c$ and $\eps$ so that for all $y \in [0,\eps]$,
\begin{equation}\label{eq:disc-lower-tail}
    \PP{(\disc(A) - \kc)_{-} > y} < e^{-cny}\,.
\end{equation}
\end{lemma}

Integrating the tail bound \cref{eq:disc-lower-tail} easily yields a one-sided variance bound:
\begin{corollary} \label{prop:lower-var}
     \begin{equation*}
        \E{(\disc(A) - \kc)_{-}^2}^{1/2} = \cO{n^{-1}}\,.
    \end{equation*}
\end{corollary}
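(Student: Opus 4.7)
The plan is a direct application of the layer cake formula together with the tail bound just established in \cref{lemma:lower-endpt}. Write
\[
    \E{(\disc(A)-\kc)_{-}^2} = \int_0^\infty 2y\,\PP{(\disc(A)-\kc)_{-} > y}\,dy,
\]
and split the integral at the threshold $\eps$ furnished by \cref{lemma:lower-endpt}. For the inner piece $y \in [0,\eps]$, substitute the exponential tail bound $e^{-cny}$ and compute
\[
    \int_0^\eps 2y\,e^{-cny}\,dy \;\le\; \int_0^\infty 2y\,e^{-cny}\,dy \;=\; \frac{2}{(cn)^2} \;=\; \cO{n^{-2}}.
\]

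For the outer piece $y > \eps$, I would note two things. First, $(\disc(A) - \kc)_{-}$ is deterministically bounded: since $\disc(A) \ge 0$ we have $(\disc(A) - \kc)_{-} \le \kc$, a constant depending only on $\alpha$. Second, by monotonicity of the tail probability, for any $y > \eps$ we have $\PP{(\disc(A)-\kc)_{-} > y} \le \PP{(\disc(A)-\kc)_{-} > \eps} \le e^{-cn\eps}$. Hence
\[
    \int_\eps^\infty 2y\,\PP{(\disc(A)-\kc)_{-} > y}\,dy \;\le\; \kc^2\,e^{-cn\eps},
\]
which is exponentially small in $n$ and is therefore absorbed into the $\cO{n^{-2}}$ bound.

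Combining the two pieces gives $\E{(\disc(A)-\kc)_{-}^2} = \cO{n^{-2}}$, and taking square roots yields the claim. There is no real obstacle here; the only thing to be careful about is that the tail bound in \cref{lemma:lower-endpt} is stated only on the interval $[0,\eps]$, which is why the outer-piece argument using the deterministic upper bound on $(\disc(A)-\kc)_{-}$ is needed.
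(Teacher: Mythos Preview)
Your proof is correct and follows essentially the same approach as the paper: both use the layer cake formula, split the integral at the threshold $\eps$ from \cref{lemma:lower-endpt}, apply the exponential tail bound on $[0,\eps]$, and handle $y>\eps$ via the deterministic bound $(\disc(A)-\kc)_- \le \kc$ together with the tail probability at $\eps$. Your write-up is slightly more detailed, but there is no substantive difference.
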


\begin{proof}[Proof of \cref{prop:lower-var}]
Integrating the tail bound \cref{eq:disc-lower-tail} for $y \in [0,\eps]$ and noting that trivially $(\disc(A) - \kc)_{-} \le \kc$ deterministically, 
\begin{align*}
    \E{(\disc(A) - \kc)_{-}^2} &\le \int_{0}^{\eps} 2y e^{-cny} dy + (\kc)^2 e^{-cn\eps} \le \cO{n^{-2}}\,.
\end{align*}
\end{proof}
In summary, the first moment method applied to $|Z_{K,\alpha}|$ shows $\disc(A)$ cannot be much smaller than $\kc$. Far more difficult is showing $\disc(A)$ is not much larger than $\kc$.

\subsection{Second Moment} We turn to the second moment method for an upper bound on $\disc(A)$. Fix $\alpha$ and let $Z_{K} := Z_{K,\alpha}$. If the variance of $|Z_{K}|$ is small, then $\disc(A)$ will concentrate around the first value $K$ for which $\E{|Z_K|} \ge 1$. 

Aubin, Perkins, and Zdeborov\'a \cite{APZ} established (originally contingent on a numerical hypothesis, later removed by \cite{ALS1})
\begin{theorem}[Coarse threshold; \cite{APZ}]\label{thm:2mm-APZ}
    Fix any $\alpha > 0$ and let $\kc := \kc(\alpha)$, $Z_K := Z_{K,\alpha}$. There exist positive constants $1 < C_1 < C_2 < \infty$ so that for any $\eps > 0$, for all $n$ sufficiently large,
    \[
        C_1 \le   \frac{\E{|Z_{\kc + \eps}|^2}}{\E{|Z_{\kc + \eps}|}^2}  \le C_2\,.
    \]
\end{theorem}
 An upper bound on $\disc(A)$ immediately follows. By the Paley-Zygmund inequality, for any constants $\alpha > 0$ and $\eps > 0$,
\[
    \liminf_{n \to \infty} \PP{\disc(A) < \kc + \eps} > 0\,.
\]
However, since $C_1 > 1$, \cref{thm:2mm-APZ} also establishes that the variance of $|Z_K|$ is too large to directly show that $\disc(A) < \kc + \eps$ with high probability. Of course, $|Z_K|$ having large variance certainly does not imply that $\disc(A)$ has large variance. A sharp threshold for $\disc(A)$ only requires that the random variable $\ind\pa{|Z_K|>0}$ is well-concentrated. 

Moving beyond the failure of the second moment method to establish a sharp threshold is often quite difficult, and indeed the results of Abbe, Li, and Sly \cite{ALS1}, and Perkins and Xu \cite{PX} required significant new ideas. Our work builds on \cite{PX}, which in turn builds on an idea of Talagrand \cite{talagrand1999intersecting}. Let us begin by formally stating the result of \cite{PX} that we wish to refine:

\begin{theorem}[\cite{PX}, Theorem 9]\label{thm:px}
    Fix $K > 0$ and $\eps > 0$. Let $Z_{\alpha} := Z_{K,\alpha}$ and $\ac:=\ac(K)$. For $\alpha \le \ac - \eps$ and for every $\delta > 0$, there exists $M = M(\delta, \eps, K)$ so that
    \begin{equation}\label{eq:px-goal}
        \limsup_{n \to \infty} \PP{\ba{\log \pa{ \frac{\ba{Z_{\alpha}}}{\E{\ba{Z_{\alpha}}}} } } \ge M \log n} \le \delta\,.
    \end{equation}  
\end{theorem}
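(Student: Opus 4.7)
The plan is to refine the row-addition martingale argument of Talagrand that underlies \cite{PX}. Let $\mathcal{F}_m := \sigma(A_1, \ldots, A_m)$ and
\[
    Y_m := \ba{\cb{\sigma \in \cb{\pm 1}^n : \ba{\innerprod{A_i}{\sigma}} \le K \text{ for all } i \le m}}\,,
\]
so $Y_0 = 2^n$ and $Y_{\alpha n} = \ba{Z_{K,\alpha}}$. Since each row is a fresh Gaussian and $\innerprod{A_m}{\sigma} \sim N(0,1)$ for every fixed $\sigma \in \cb{\pm 1}^n$, we have $\E{Y_m \mid \mathcal{F}_{m-1}} = p_K Y_{m-1}$ and hence $\E{Y_m} = 2^n p_K^m = \E{\ba{Z_{K,\alpha}}}$. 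Telescoping gives
\[
    \log \frac{\ba{Z_{K,\alpha}}}{\E{\ba{Z_{K,\alpha}}}} = \sum_{m=1}^{\alpha n} D_m\,, \qquad D_m := \log \frac{Y_m}{p_K Y_{m-1}}\,.
\]
The upper tail is immediate from Markov: $\PP{\ba{Z_{K,\alpha}} > n^M \E{\ba{Z_{K,\alpha}}}} \le n^{-M} \le \delta/2$ for $M = M(\delta)$ large. All the work is in the lower tail.

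For the lower tail, the plan is to apply a conditional second-moment estimate to $R_m := Y_m/(p_K Y_{m-1})$ at each step. Conditional on $\mathcal{F}_{m-1}$, $R_m$ has mean $1$ and a direct computation gives
\[
    \mathrm{Var}\pa{R_m \mid \mathcal{F}_{m-1}} = \frac{1}{p_K^2 Y_{m-1}^2} \sum_{\sigma, \sigma' \in Z_{m-1}} \pa{q_K(t_{\sigma,\sigma'}) - p_K^2}\,,
\]
where $t_{\sigma,\sigma'} := n^{-1}\innerprod{\sigma}{\sigma'}$ and $q_K(t)$ denotes the probability that a fresh Gaussian row satisfies both of its constraints for a pair of $\pm 1$ vectors of overlap $t$. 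Since $q_K(t) - p_K^2$ vanishes quadratically at $t = 0$, this variance will be $\cO{1/n}$ whenever the pairwise overlap distribution of $Z_{m-1}$ is concentrated near the origin.

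The key technical step is to identify, for each $m$, a good event $G_{m-1} \in \mathcal{F}_{m-1}$ with $\PP{G_{m-1}^c} = \co{1/n}$ on which the overlap distribution of $Z_{m-1}$ is near-atomic at zero and $\mathrm{Var}(R_m \mid \mathcal{F}_{m-1}) = \cO{1/n}$. This is where the strict subcriticality $\alpha \le \alpha_c - \eps$ enters: for any fixed $\eta > 0$, the \emph{annealed} count of solution pairs in $Z_m$ with overlap $\ba{t} > \eta$ is exponentially smaller than $\E{Y_m}^2$, uniformly in $m \le \alpha n$, and a first-moment Markov bound then furnishes $G_{m-1}$. On $G_{m-1}$, Chebyshev together with a higher-moment refinement of the variance bound shows that $\PP{\ba{D_m} > M \log n \mid \mathcal{F}_{m-1}}$ is sufficiently small to survive a union bound over the $\alpha n$ steps, after truncating $D_m$ at $\pm M \log n$.

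Applied to the truncated martingale, Freedman's inequality with aggregate conditional variance $\alpha n \cdot \cO{1/n} = \cO{1}$ and increments bounded by $M \log n$ yields
\[
    \PP{\ba{\textstyle\sum_m D_m} \ge M' \log n} \le 2\cE{-c M'/M}\,,
\]
which is at most $\delta/2$ for $M' = (M/c)\log(4/\delta)$; combined with the upper tail, this gives the theorem. The main obstacle will be the uniform control of the overlap distribution of the dynamically evolving set $Z_m$: unlike the static second-moment analysis of \cref{thm:2mm-APZ}, the martingale argument requires control at every step, conditional on the history of rows, and the failure probability of the good event must be summable over the $\alpha n$ steps. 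Additionally, controlling the extreme lower tail of each $R_m$ at the scale $n^{-M}$, rather than merely at constant scale, forces the higher-moment refinement that is ultimately responsible for the logarithmic (rather than constant) width of the critical window.
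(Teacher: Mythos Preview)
Your martingale decomposition, the variance formula for $R_m$ in terms of the overlap distribution, and the use of Freedman are all the right ingredients and match the Talagrand/Perkins--Xu framework that the paper describes. The gap is in the sentence ``a first-moment Markov bound then furnishes $G_{m-1}$.'' The annealed estimate you invoke says that the \emph{expected number} of high-overlap pairs in $Z_{m-1}$ is at most $e^{-cn}\,\E{Y_{m-1}}^2$. Markov then controls $\#\{\text{high-overlap pairs}\}$ relative to $\E{Y_{m-1}}^2$, but the overlap \emph{distribution} you need for the conditional variance bound is $\#\{\text{high-overlap pairs}\}/Y_{m-1}^2$. Converting one to the other requires a lower bound on $Y_{m-1}$, which is precisely $Q_{m-1}$ not being too negative---the very thing you are trying to establish. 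So the argument is circular as written.

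The resolution in \cite{PX} (and in this paper's refinement) is to make this circularity into an honest induction. One defines a ``regular'' event for $S_t$ whose threshold explicitly depends on $|Q_t|$, and introduces stopping times $\tau_S$ (first failure of regularity) and $\tau_Q$ (first time $|Q_t|$ is large). The crucial computation is not a bare Markov bound but rather an \emph{annealed} bound on the joint quantity $\E{\exp(2Q_t)\cdot(\text{overlap mgf})}$, which comes from the second-moment ratio $\E{|S_t|^2}/\E{|S_t|}^2\le C$; on the event $\{|Q_t|\text{ small}\}$ this yields the overlap tail needed for regularity at time $t$. Regularity at time $t$ then feeds into the sub-exponential tail of the increment $Y_{t+1}$ (your $R_m-1$), which closes the loop. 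Your proposal would be correct once this inductive bootstrap replaces the single Markov step; without it, the good events $G_{m-1}$ cannot be certified with the claimed probability.
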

Similarly to \cref{thm:2mm-APZ}, the original statement of \cref{thm:px} was originally contingent on a numerical conjecture (\cref{eq:F-conj} below) that can be removed by the results of \cite{ALS1}. Recall
\[
    \E{|Z_{\alpha}|} = \pa{p_K}^{n\pa{\alpha_c - \alpha}}\,,
\]
so that $\E{|Z_\alpha|} = \cE{\cOm{\eps n}}$ is exponentially large. In contrast, \cref{thm:px} yields that the fluctuations of $\log|Z_{\alpha}|$ are at most polynomial, establishing that $|Z_\alpha|$ is non-empty with high probability. The shortcoming we seek to improve is that \cref{thm:px} only holds for $|\alpha - \alpha_c|$ of constant order. In order to capture the critical window of the SBP, we need to show the bound of \cref{thm:px} actually continues to hold even as $|\alpha - \ac|$ approaches order $1/n$. We begin with the important first step of showing that this improvement holds with positive probability. \\

\begin{theorem}[Second moment method]\label{thm:2mm}
    Fix positive $K > 0$. There exist positive constants $1 < C_1 < C_2 < \infty$, so that for $n$ sufficiently large and any $\alpha \defeq  \alpha(n) \le \ac(K)$,
    \begin{equation}\label{eq:2mm-goal1}
        C_1 < \frac{\E{|Z_{K,\alpha}|^2}}{\E{|Z_{K,\alpha}|}^2} < C_2\,.
    \end{equation}
    Consequently,  for $A$ an $(\alpha,n)$-Gaussian matrix (see \cref{def:ensemble}),
    \begin{equation}\label{eq:2mm-goal2}
        \PP{\disc(A) \le \kc(\alpha)} > C_2^{-1} \,.
    \end{equation}    
\end{theorem}
\cref{eq:2mm-goal1} will be used as an important step in our strengthening of \cref{thm:px}, and \cref{eq:2mm-goal2} will be the main ingredient for our lower bound on the variance of $\disc(A)$. We prove \cref{thm:2mm} in the next section. 

In order to compute the second moment of $|Z_{K,\alpha}|$, we study a quantity we will call the \textit{second moment profile function} (sometimes called in physics the ``annealed free energy of pairs''). The second moment profile function is a measure of how much pairs of corners $(\sigma, \tau)$ from the discrete hypercube contribute to the second moment $\E{\ba{Z_{K,\alpha}}^2}$, as a function of the normalized Hamming distance $n^{-1}\sum_{i=1}^n \ind(\sigma_i \neq \tau_i)$. 

\begin{definition}[Pair probability]
    For $X$ and $Y$ independent standard normals, let
    \[
        q_K(\beta) \defeq  \PP{\ba{\sqrt{\beta}X + \sqrt{1-\beta}Y} \le K,~ \ba{ \sqrt{\beta}X - \sqrt{1-\beta}Y}  \le K}\,.
    \]
\end{definition}
Then $q_K(\beta)^{\alpha n}$ is the probability that $\sigma$ and $\tau$ with $\innerprod{\sigma}{\tau} = (2\beta-1)n$ are both in $Z_{K,\alpha}$.
\begin{definition}[Second moment profile function]
    Let $H$ be binary entropy; define the second moment profile function $F:[0,1] \to \RR$ by
    \[
        F(\beta) \defeq  F_{K,\alpha}(\beta) = H(\beta) + \alpha \log q_K(\beta)\,.
    \]
\end{definition}

\begin{figure}
    \centering
    \includegraphics[width=\textwidth / 2]{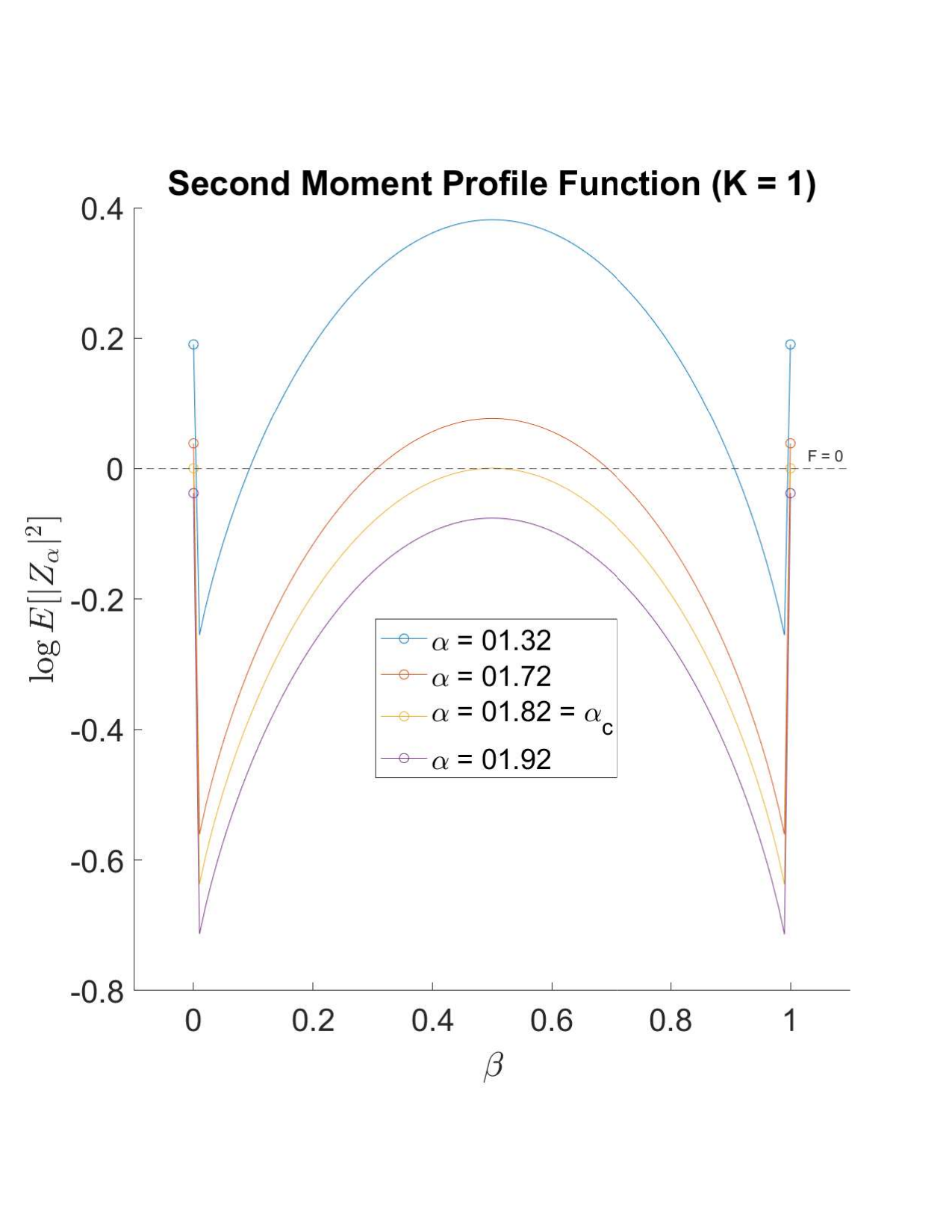}
    \caption{The shape of the second moment profile function $F(\beta)$ for $K = 1$ and various $\alpha$. (An identical plot has previously appeared in Figure 1a of \cite{APZ}). Note that $F(\beta)$ is symmetric around $\beta = 1/2$; $F(1/2) = 2F(0) = 2F(1)$; and most importantly, $F$ has a unique maximum at $\beta = 1/2$ for $\alpha \le \alpha_c$.}
    \label{fig:2mm}
\end{figure}

Let us collect some trivial facts about $F$ for intuition. First, Gaussian random variables which are orthogonal are also independent, so $q(1/2) = p^2$ for any $K$ and $\alpha$. Second, the functions $q(\beta)$ and $H(\beta)$ are symmetric around $\beta = 1/2$, so $F$ is as well. Finally, by construction of $\ac$, for any $K$ and $n$,
\[
    F_{K, \ac(K), n}\pa{\frac{1}{2}} = F_{K, \ac(K)}\pa{0} = - \log(2)\,.
\]
Regarding the shape of $F$, the following was conjectured and assumed in \cite{APZ} and \cite{PX} (as well as an earlier version of \cite{ALS1}) in order for second moment method computations to be tractable:
\begin{conjecture}[Shape of $F$; conjecture]
    For all positive $K$ and $\alpha$ with $F_{K,\alpha}''(1/2) < 0$, it further holds that $F \defeq  F_{K,\alpha}$ has a single critical point in $(0,1/2)$. In particular, for any $0 \le a \le b \le 1/2$,
    \begin{equation}\label{eq:F-conj}
        \max_{a \le \beta \le b} F(\beta) \in \max\cb{F(a),~F(b)}\,.
    \end{equation}
\end{conjecture}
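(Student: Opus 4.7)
The plan is to analyze $F$ on $[0,1/2]$ through its first and second derivatives, exploiting the symmetry $F(\beta)=F(1-\beta)$ and the representation of $q_K$ as a bivariate Gaussian orthant probability with correlation $\rho = 2\beta - 1$.

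First I would establish the boundary behavior of $F'$. Symmetry gives $F'(1/2)=0$, and the hypothesis $F''(1/2)<0$ ensures $F$ is strictly increasing on some interval $(1/2-\delta,1/2)$. For the behavior near $\beta=0$, Plackett's identity yields
\[
    \partial_\rho q_K(\rho) = \frac{1}{\pi\sqrt{1-\rho^2}}\left[e^{-K^2/(1+\rho)} - e^{-K^2/(1-\rho)}\right].
\]
As $\rho \to -1^+$, i.e.\ $\beta \to 0^+$, the first exponential vanishes super-exponentially and $\sqrt{1-\rho^2} \sim 2\sqrt{\beta}$, so $\partial_\beta \log q_K(\beta) \sim -C_K/\sqrt{\beta}$ for a positive constant $C_K$. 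Since this diverges to $-\infty$ faster than $H'(\beta) \sim -\log\beta$ diverges to $+\infty$, we get $F'(\beta) \to -\infty$, and in particular $F$ is decreasing near $\beta=0$. By continuity, $F$ has at least one local minimum in $(0,1/2)$.

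The core task is uniqueness. The plan is to prove the sharper claim that $F''$ has at most one sign change on $(0,1/2)$. Granting this, the sign pattern of $F''$ on $(0,1/2)$ must be $(-,+,-)$: negative near the endpoints (since $H''(\beta) = -1/(\beta(1-\beta)) \to -\infty$ as $\beta \to 0^+$, and $F''(1/2)<0$ by assumption), and necessarily positive somewhere in between, since otherwise $F'$ would be monotone and $F$ could not have a local minimum. This makes $F'$ unimodal on $(0,1/2)$; combined with $F'(0^+)=-\infty$, $F'(1/2)=0$, and the strict decrease of $F'$ at $1/2$, it follows that $F'$ crosses zero exactly once, at a unique strict local minimum $\beta^* \in (0,1/2)$. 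Quasi-convexity of $F$ on $[0,1/2]$, and hence the endpoint-max statement $\max_{\beta\in[a,b]} F(\beta) = \max\{F(a), F(b)\}$ for any subinterval $[a,b] \subseteq [0,1/2]$, is then immediate.

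The main obstacle is the single sign-change claim for $F''$. Writing $F''(\beta) = -1/(\beta(1-\beta)) + \alpha(\log q_K)''(\beta)$, this reduces to showing that the equation $\alpha\beta(1-\beta)(\log q_K)''(\beta) = 1$ has at most one solution in $(0,1/2)$. Differentiating Plackett's identity once more produces a closed-form (if unwieldy) expression for $(\log q_K)''$ as a rational combination of $e^{-K^2/(1\pm\rho)}$ and powers of $\sqrt{1-\rho^2}$; after the change of variable to $\rho$ and clearing denominators, the problem further reduces to a monotonicity claim for a one-variable function built from the ratio $e^{-2K^2\rho/(1-\rho^2)}$. This appears tractable by direct computation, though some case analysis in $K$ is probably unavoidable — which is presumably why the earlier works \cite{APZ,PX} chose to leave it as an unproven numerical conjecture rather than carry out the detailed estimates, and why \cite{ALS1} worked instead to bypass the need for this structural claim altogether.
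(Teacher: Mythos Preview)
The statement you are attempting to prove is explicitly labeled a \emph{conjecture} in the paper; there is no proof of it to compare against. The paper neither claims nor proves this shape result. Instead, it uses the weaker Lemma~\ref{lemma:free-energy}, imported from \cite{ALS1}, which establishes only that $F$ is decreasing near $0$, increasing near $1/2$, and bounded strictly below $F(1/2)$ in between --- enough for the second-moment computation, but not the full unimodality of $F'$.

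Your proposal is a plan, not a proof, and you seem to recognize this in your final paragraph. But there are also two concrete errors in the argument as written.

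First, your claim that $F''(\beta)\to -\infty$ as $\beta\to 0^+$ is false. You correctly derive $(\log q_K)'(\beta)\sim -C_K/\sqrt{\beta}$; differentiating gives $(\log q_K)''(\beta)\sim C_K/(2\beta^{3/2})\to +\infty$, and this dominates $H''(\beta)\sim -1/\beta$. So $F''(0^+)=+\infty$, not $-\infty$. Second, and relatedly, you state you will prove ``$F''$ has at most one sign change'' and then immediately conclude the sign pattern is $(-,+,-)$, which has two sign changes. This is internally inconsistent.

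With the corrected endpoint behavior $F''(0^+)=+\infty$ and $F''(1/2)<0$, a single sign change of $F''$ would give the pattern $(+,-)$, making $F'$ unimodal (increasing then decreasing) on $(0,1/2)$. Since $F'(0^+)=-\infty$ and $F'(1/2)=0$ with $F'$ strictly decreasing at $1/2$, this would indeed force exactly one interior zero of $F'$, yielding the conjecture. So the \emph{corrected} outline is coherent. But the hard step --- the single sign-change claim for $F''$, or equivalently a monotonicity statement for $\beta(1-\beta)(\log q_K)''(\beta)$ --- remains entirely unproven in your proposal, and this is precisely the obstacle that led \cite{APZ,PX} to leave the statement as a numerical conjecture and \cite{ALS1} to route around it.
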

Compelling numerical evidence was supplied in \cite{APZ}. Later, in \cite{ALS1}, a slightly weaker form of this conjecture was recovered, for $K >0$ and $\alpha < \ac$. The main difference is that for some $K$, it is only easy to rigorously establish that $F$ is decreasing near $0$ and increasing near $1/2$. In between, it is far easier to simply show that $F$ is much less than $F(1/2)$---rather than controlling the number of critical points---which is more than enough for second moment method applications. We now collect the facts, rigorously established in \cite{ALS1}, that we will need: 

\begin{lemma}[Shape of $F$ \cite{ALS1}]\label{lemma:free-energy}
Recall $F \defeq  F_{K,\alpha}$.
\begin{enumerate}
    \item For any $\alpha$ and $K$
    \begin{equation}\label{eq:F''(1/2)}
        F''\pa{\frac{1}{2}} =  4\pa{-1 + \frac{2}{\pi}\frac{\alpha K^2 e^{-K^2}}{p^2}}\,.
    \end{equation}
    
    \item Fix any $K > 0$. There exists $\eps \defeq  \eps(K) > 0$ sufficiently small so that for any $\alpha \le \ac(K)$, 
    \begin{equation}\label{eq:F''-ub}
        F''\pa{\frac{1}{2}} < -\eps\,.
    \end{equation}
    
    \item There exists $b\defeq  b(K) > 0$ such that for all $\alpha \le \ac$, $F_\alpha(\beta)$ is decreasing for $\beta \in [0,b]$. 
    
    \item Let $K > 0$ and $F_{\ac} \defeq  F_{K,\ac(K)}$. There exists $\eps \defeq  \eps(K) > 0$ such that, for any $x,y \in [0,1/2]$,
    \begin{equation}\label{eq:F-max}
        \max_{\beta \in [x, y]} F_{\ac}(\beta) \le \max \{F_{\ac}(x),~F_{\ac}(y),~ F_{\ac}(1/2)-\eps\}\,.
    \end{equation}
    
\end{enumerate}
\end{lemma}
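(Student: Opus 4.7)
The lemma collects four structural facts about the univariate function $F(\beta) = H(\beta) + \alpha \log q_K(\beta)$. Parts (1) and (3) I would dispatch by direct Taylor analysis at the special points $\beta = 1/2$ and $\beta = 0$; parts (2) and (4) would reduce to transcendental inequalities about $F$ at criticality, originally conjectured numerically by \cite{APZ} and rigorously resolved in \cite{ALS1}.

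For (1), the computation is cleanest using the bivariate Gaussian orthant representation $q_K(\beta) = \int_{[-K,K]^2}\phi_\rho(u,v)\,du\,dv$ with correlation $\rho = 2\beta - 1$. Plackett's formula $\partial_\rho \phi_\rho = \partial_u\partial_v\phi_\rho$ combined with the Mehler expansion gives
\[
    \partial_\rho^2 q_K \Big|_{\rho=0} = \left(\int_{-K}^K (u^2 - 1)\phi(u)\,du\right)^2 = 4K^2\phi(K)^2 = \frac{2K^2 e^{-K^2}}{\pi}.
\]
Symmetry of $q_K$ about $\beta = 1/2$ forces $q_K'(1/2) = 0$; the chain rule $\partial_\beta^2 = 4\partial_\rho^2$ and $H''(1/2) = -4$ then yield the stated identity. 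Part (2) reduces to the scalar inequality
\[
    2 \log(2) \cdot K^2 e^{-K^2} < \pi\, p_K^2\, \log(1/p_K) \quad \text{for all } K > 0,
\]
obtained by plugging $\alpha_c = -\log 2 / \log p_K$ into the formula from (1). I would confirm this via asymptotic expansions at $K \to 0$ (the ratio of the two sides vanishes like $1/\log(1/K)$) and $K \to \infty$ (ratio like $K^3 e^{-K^2/2}$), and verify the bounded middle range by quantitative analytic bounds or rigorous interval arithmetic, following \cite{ALS1}.

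Part (3) is a boundary-behavior computation. Plackett gives $\partial_\rho q_K = \phi_\rho(K,K) - 2\phi_\rho(K,-K) + \phi_\rho(-K,-K)$; as $\rho \to -1$ (i.e.\ $\beta \to 0$) the anti-diagonal term $\phi_\rho(K,-K) \sim (4\pi\sqrt\beta)^{-1} e^{-K^2/2}$ diverges like $\beta^{-1/2}$, while the two diagonal terms decay exponentially in $1/\beta$. Since $q_K(\beta) \to p > 0$, this forces $q_K'(\beta)/q_K(\beta) \sim -c(K) \beta^{-1/2}$ near zero, which dominates the opposing divergence $H'(\beta) \sim \log(1/\beta)$. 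Hence $F'(\beta) < 0$ on $(0, b(K)]$ for some $b(K) > 0$.

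Part (4) is the main obstacle, being a global statement that rules out interior near-maxima of $F_{\alpha_c}$ on $[0,1/2]$. My strategy is a two-region argument, glued to part (3) which already handles $[0, b(K)]$. Near $\beta = 1/2$, part (2) gives $F''(1/2) \le -\eps$; combined with $F'(1/2) = 0$ by symmetry, Taylor expansion yields $F(1/2 + h) \le F(1/2) - (\eps/4)h^2$ for $|h| \le \delta(K)$. On the compact middle region $[b(K), 1/2 - \delta(K)]$ the plan is to promote the strict pointwise inequality $F_{\alpha_c}(\beta) < F_{\alpha_c}(1/2)$ to a uniform gap $F_{\alpha_c}(\beta) \le F_{\alpha_c}(1/2) - \eta(K)$ by continuity and compactness. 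Establishing the pointwise inequality is the substantive content of the \cite{APZ} conjecture; the cleanest known route is to show $F_{\alpha_c}'$ has exactly one zero in $(0, 1/2)$, so that $F_{\alpha_c}$ is strictly unimodal on $[0, 1/2]$ (first decreasing, then increasing) with its maximum on any subinterval attained at an endpoint. This zero count reduces to a sign analysis of $F''$ through the Mehler-type expansion of $q_K$, and is the most intricate input from \cite{ALS1}.
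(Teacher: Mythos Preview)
Your treatments of parts (1)--(3) are correct and essentially match the paper's: (1) is dispatched as a direct computation (the paper does not spell out the Plackett/Mehler route, but it is equivalent), (2) is reduced to the scalar inequality already verified in \cite{APZ}, and (3) follows from $F'(\beta)\to-\infty$ as $\beta\to 0^+$, exactly as you argue.

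For part (4), your three-region decomposition---decreasing on $[0,b_1]$, increasing on $[b_2,1/2]$, uniform gap below $F_{\alpha_c}(1/2)$ on the compact middle $[b_1,b_2]$---is exactly the structure the paper extracts from \cite{ALS1}. The divergence is in how you propose to control the middle region. You suggest proving that $F_{\alpha_c}'$ has exactly one zero in $(0,1/2)$, i.e.\ global unimodality, calling this ``the cleanest known route'' and attributing it to \cite{ALS1}. But that is precisely the still-open conjecture \eqref{eq:F-conj}: \cite{ALS1} establish the single-critical-point picture only for $K>4$, via a sign analysis of the auxiliary quantity $\cL(\beta)$ against $q(\beta)$. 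For $K\in[0.1,4]$ they instead verify the weaker (but sufficient) bound $F_{\alpha_c}(\beta)\le F_{\alpha_c}(1/2)-\eps$ on the explicit interval $[0.005,0.3]$ by a computerized grid search, after uniformly bounding the partial derivatives of $(K,\beta)\mapsto F_K(\beta)-F_K(1/2)$ to control the mesh error; for $K<0.1$ they exploit smallness of $K$ to get crude analytic bounds on $p_K$ and $q_K(\beta)$ directly. The ``sign analysis of $F''$ through the Mehler-type expansion'' you describe is not what is actually carried out for $K\le 4$.

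So your plan is structurally sound but over-claims the available input: the unimodality you invoke is conjectural in the regime $K\le 4$, and the rigorous proof substitutes case analysis plus numerics for the clean picture. Once you swap in that weaker input, everything else in your decomposition (monotone boundary regions, compactness promotion of the pointwise gap to a uniform one) assembles correctly and matches the paper.
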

The proof of \cref{lemma:free-energy} is deferred to the appendix, since it is directly reproduced from \cite{ALS1} with only trivial modifications. Upon first read, one could also simply assume \cref{eq:F-conj}. \\

\subsection{Sharp Threshold}
We seek to boost the ``positive probability'' guarantee of \cref{thm:2mm} into a ``high probability'' guarantee. Our goal is: if $\log\pa{ \E{\ba{Z_{K,\alpha}}} } \gg \log n$, then $|Z_r| \gg 1$ with high probability. Making this precise and also giving a quantitative tail bound, we will show:
\begin{theorem}[Upper tail]\label{thm:sharp-transition}
    Fix $K > 0$. There exists a constant $c >0$ so that if 
    \[
        \alpha \defeq  \alpha(n) = \ac(K) - xn^{-1}\log(n)
    \]
    for a sufficiently large constant $x > 0$, then for all $n$ sufficiently large:
    \begin{equation}\label{eq:st-goal1}
        \PP{|Z_{K,\alpha}| = 0} < n^{-cx}\,.
    \end{equation}
    Equivalently, for any $\alpha > 0$ there exists some constant $c > 0$ so that for any fixed $x$ sufficiently large, for all $n$ sufficiently large:
    \begin{equation}\label{eq:st-goal2}
        \PP{\disc(A) > K_c + \frac{x\log(n)}{n}} < n^{-cx}\,.
    \end{equation}
\end{theorem}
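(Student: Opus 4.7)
The plan is to adapt the martingale-based cavity argument of Perkins and Xu that underlies their proof of \cref{thm:px}, replacing their strictly subcritical second-moment estimate with the sharper at-criticality bound \cref{thm:2mm} that we now have at our disposal. The argument begins with a mean/concentration split: by \cref{prop:Z-expansion-K}, a gap $\alpha_c(K) - \alpha = x\log(n)/n$ translates into $\log \E{|Z_{K,\alpha}|} \ge c_1 x \log(n)$ for some $c_1 = c_1(K) > 0$, so to establish \cref{eq:st-goal1} it suffices to show the matching multiplicative lower-tail estimate
\[
    \PP{|Z_{K,\alpha}| \le \E{|Z_{K,\alpha}|} \cdot n^{-(c_1 / 2) x}} \le n^{-cx}.
\]
Combined with the mean lower bound, this forces $|Z_{K,\alpha}| \ge 1$ with the required probability.

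For the multiplicative tail estimate, introduce the Doob martingale $W_t := \E{|Z_{K,\alpha}| \mid a_1, \dots, a_t}$. A direct computation gives $W_t = p_K^{m-t} |Z_t|$, where $|Z_t|$ counts solutions to the first $t$ constraints, so $\log |Z_{K,\alpha}| - \log \E{|Z_{K,\alpha}|} = \sum_{t=1}^{m} \xi_t$ with log-increments $\xi_t := \log(W_t / W_{t-1})$ forming a supermartingale difference sequence. Setting $M_t = W_t/W_{t-1}$, the conditional second moment of $M_t$ reduces to the \emph{local} second-moment ratio of the discrepancy problem restricted to $Z_{t-1}$,
\[
    \E{M_t^2 \mid \FF_{t-1}} = \frac{1}{p_K^2 |Z_{t-1}|^2} \sum_{\sigma, \tau \in Z_{t-1}} q_K\pa{\tfrac{1}{2} + \tfrac{\innerprod{\sigma}{\tau}}{2n}},
\]
which we expect to be $O(1)$ whenever $Z_{t-1}$ has the typical pair-overlap profile (concentration of pairs near $\beta = 1/2$). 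On a good event $G_t$ enforcing this, the free-energy analysis of \cref{lemma:free-energy} together with \cref{thm:2mm} bounds the displayed ratio by an absolute constant uniformly in $t$, so that $\E{\xi_t^2 \mid \FF_{t-1}} \le \sigma^2$. A Freedman-type martingale concentration applied on $\bigcap_t G_t$, with the increments truncated at scale $\log(n)$, then yields exponential tails $\PP{\sum_t \xi_t \le -\lambda} \le \exp(-c\lambda)$ for $\lambda$ in a suitable range; choosing $\lambda = (c_1/2) x \log(n)$ delivers the desired bound.

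The principal obstacle is to establish the good events $G_t$ with a per-step failure probability of at most $n^{-\Omega(x) - 1}$, so that a union bound over $t \in [m]$ fits inside the target tail $n^{-cx}$. Since the subcritical gap of the intermediate problems shrinks to zero as $t \to m$, the Perkins--Xu argument, which leaves itself a constant-size exponential margin, can no longer be applied off-the-shelf; this is precisely where \cref{thm:2mm} becomes indispensable. The plan is to define $G_t$ by a quantitative control on the overlap distribution of $Z_t$ and to bound $\PP{G_t^c}$ by combining the uniform strict-concavity bound $F''(1/2) < -\eps$ from \cref{eq:F''-ub} with the global maximum bound \cref{eq:F-max}, which together rule out anomalous concentrations of pairs away from $\beta = 1/2$ uniformly in the intermediate $(t, \alpha)$ parameters. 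The uniform-in-$t$ nature of these free-energy estimates, together with the ability of \cref{thm:2mm} to close the exponential margin exactly at criticality, is what allows the cavity argument to be iterated across all $m$ scales at once.
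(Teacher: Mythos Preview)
Your plan is essentially the paper's: the Talagrand/Perkins--Xu cavity martingale, with the at-criticality second-moment bound \cref{thm:2mm} replacing the strictly-subcritical one so that the regularity (good-event) step can be carried all the way to $t=\alpha_0 n$. The paper's proof has the same three-beat induction you sketch (regularity $\Rightarrow$ increment control $\Rightarrow$ concentration of $Q_t$ $\Rightarrow$ regularity), and uses \cref{thm:2mm} exactly where you locate it, in the expectation bound that feeds the Markov step controlling $\PP{G_t^c}$.

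There is one quantitative slip you should fix before executing. Saying that on $G_t$ the displayed ratio $\E{M_t^2\mid\FF_{t-1}}$ is ``bounded by an absolute constant'' is true but far too weak for Freedman: if $\mathrm{Var}(M_t\mid\FF_{t-1})=\Theta(1)$ then the predictable variance over $m=\Theta(n)$ steps is $\Theta(n)$, and Freedman at level $\lambda=\Theta(x\log n)$ gives nothing. What you actually need---and what the paper's ``regular'' condition is engineered to deliver---is the much finer bound $\E{M_t^2\mid\FF_{t-1}}=1+\cO{x\log(n)/n}$, coming from the fact that on $G_t$ the overlap of a random pair in $Z_{t-1}$ is at most $\cO{\sqrt{n\,x\log n}}$, so that $q_K(\beta)/p_K^2 = 1+\cO{x\log(n)/n}$ via the Taylor expansion around $\beta=1/2$. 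With that, the total predictable variance is $\cO{x\log n}$ and your Freedman step goes through. Relatedly, a second-moment bound on $M_t$ alone does not bound $\E{\xi_t^2\mid\FF_{t-1}}$, since $\xi_t=\log M_t$ can be very negative; this is why the paper (and Perkins--Xu) work with sub-exponential tails on $Y_t=M_t-1$ rather than just second moments, and why your truncation step needs an accompanying tail bound showing the truncation event has probability $\le n^{-cx}$.
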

It is fairly straightforward to control the variance of $\disc(A)$ by integrating this tail bound. The only (small) complication is that our tail bound does not allow for $x$ growing as a function of $n$. 
\begin{corollary}\label{cor:var+}
    Fix $\alpha > 0$ and let $n$ be sufficiently large. We have the one-sided variance bound
    \begin{equation}\label{eq:var+}
        \E{(\disc(A) - \kc)_{+}^2}^{1/2} = \cO{\frac{\log(n)}{n}}\,.
    \end{equation}
\end{corollary}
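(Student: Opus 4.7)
The plan is to integrate the tail bound \cref{eq:st-goal2} via the layer-cake identity
\[
    \E{(\disc(A) - \kc)_{+}^2} = \int_0^\infty 2y\, \PP{\disc(A) > \kc + y}\, dy.
\]
I fix a large constant $x_0$ (to be chosen below) and split the integration range at $y_0 := x_0 \log(n)/n$ and at a secondary threshold $y_1 := \sqrt{C \log n}$ for a suitably large $C > 0$.

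On $[0, y_0]$ I just bound $\PP{\cdot}\le 1$, giving $\int_0^{y_0} 2y\,dy = y_0^2 = \cO{\log^2(n)/n^2}$, which already matches the target rate. On $[y_0, y_1]$, the author's remark warns that \cref{eq:st-goal2} is not available for $x$ growing with $n$; I sidestep this by applying the bound only once, at $x = x_0$: by monotonicity, $\PP{\disc(A) > \kc + y} \le \PP{\disc(A) > \kc + y_0} \le n^{-c x_0}$ for every $y \ge y_0$, so this piece contributes at most $y_1^2 \cdot n^{-c x_0} = \cO{\log(n) \cdot n^{-c x_0}}$. I then choose $x_0$ large enough that $c x_0 \ge 2$, making the middle region's contribution $\cO{n^{-2}}$.

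For the far tail $[y_1, \infty)$ I use the deterministic dominator $\disc(A) \le \norm{A\sigma_0}_\infty$ for the specific witness $\sigma_0 = (1,\dots,1)^\top$. Each coordinate of $A\sigma_0$ is a sum of $n$ independent $N(0,1/n)$ entries, hence a standard Gaussian, so a union bound yields $\PP{\disc(A) > y} \le 2\alpha n\, e^{-y^2/2}$. Integrating,
\[
    \int_{y_1}^\infty 2y \cdot 2\alpha n\, e^{-y^2/2}\, dy = 4\alpha n\, e^{-y_1^2/2} = \cO{n^{1 - C/2}},
\]
which is negligible once $C \ge 6$. Combining the three contributions gives $\E{(\disc(A)-\kc)_+^2} = \cO{\log^2(n)/n^2}$, and taking square roots delivers the corollary.

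The only genuine subtlety, as flagged in the text, is that \cref{eq:st-goal2} effectively bites at a single fixed scale $x_0 \log(n)/n$, so one cannot treat $x$ as a dummy integration variable and do a clean change of variables. Applying the bound once at a single well-chosen $x_0$ and patching the remaining tail with an elementary Gaussian estimate on a specific discrepancy-witness is the natural workaround, and no other serious difficulty arises.
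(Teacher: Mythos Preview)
Your proof is correct and takes essentially the same approach as the paper: both arguments split at the scale $x_0\log(n)/n$, apply \cref{eq:st-goal2} once at that fixed level, and control the far tail via a crude bound on $\|A\sigma\|_\infty$ for a single witness $\sigma$. The only cosmetic difference is packaging---the paper uses a two-piece indicator decomposition with Cauchy--Schwarz and the fourth-moment bound $\E{\|A\sigma\|_\infty^4}<n^4$, whereas you use a three-region layer-cake integral with an explicit Gaussian tail; both routes lead to the same $\cO{\log^2(n)/n^2}$ bound on the second moment.
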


\begin{proof}[Proof of \cref{cor:var+}]
A crude bound suffices. We can always upper-bound $\disc(A)$ by $\|A\sigma\|_\infty$ for some arbitrary $\sigma$. Recall that $A\sigma$ is a centered Gaussian vector with an $\alpha n \times \alpha n$ identity covariance matrix. It is easy to check by a direct computation the very sub-optimal bound: $\E{\|A\sigma\|_\infty^4} < n^4$. Define
\[
    U\defeq  (\disc(A) - \kc)_{+}\,.
\]
Then for sufficiently large $x>0$, we have by Cauchy-Schwarz:
\begin{align*}
    \E{U^2}^{1/2} &= \E{U^2 \ind(U \le xn^{-1}\log n)}^{1/2} + \E{U^2 \ind(xn^{-1}\log n \le U)}^{1/2} \\
    &\le  \frac{x\log(n)}{n} + \E{\|A\sigma\|_\infty^4}^{1/4} \PP{\disc(A) > \kc + xn^{-1}\log n}^{1/4} \\
    &\le \frac{x\log(n)}{n} + n\pa{n^{- cx/4}} \\
    &= \cO{\frac{\log(n)}{n}}\,.
\end{align*}

\end{proof}

\section{Second Moment Method}\label{section:UB-2}

Here we establish that solutions exist up to criticality with positive probability.
  
\begin{proof}[Proof of \cref{thm:2mm}] 
Fix $K > 0$ and some sequence $\alpha \defeq  \alpha(n) \le \ac$. Our goal is to upper bound the ratio
\begin{equation}\label{eq:2mm-ratio}
    \frac{\E{|Z_{K,\alpha}|^2}}{\E{|Z_{K,\alpha}|}^2} = 2^{-n}\sum_{s = 0}^n \binom{n}{s} \pa{ \frac{q\pa{\frac{s}{n}}}  {p^2} }^{\alpha n}\,.
\end{equation}
We claim that \cref{eq:2mm-ratio} is monotone increasing in $\alpha$, so that if we establish the claim for $\alpha$ set to equal $\ac$, then the claim also follows for all $\alpha \le \ac$. Indeed, for any $\beta$, it easy to check that $p^2 \le q(\beta)$ by a direct computation.  Alternatively, if the reader is familiar with such tools, the Gaussian Correlation Inequality (e.g. Theorem 1 of \cite{corr}) also immediately yields $p^2 \le q(\beta)$. Indeed, $q(\beta)$ is the probability that the Gaussian vector $A_i$ is in the intersection of two origin-symmetric convex slabs:
\[
    \cb{a: |\innerprod{a}{\sigma}| \le K} \cap \cb{a: |\innerprod{a}{\tau}| \le K}\,,
\]
where $\sigma$ and $\tau$ are corners of the discrete cube with Hamming distance $\beta n$. 

So, it suffices to upper bound \cref{eq:2mm-ratio} for $\alpha = \ac$. Note that for this choice of $\alpha$, the denominator of the left of \cref{eq:2mm-ratio} is simply one. For convenience, we adopt the shorthand $F \defeq  F_{K,\ac(K), n}$. \\

Let $m \defeq  \ac n$ be the number of rows in the matrix $A$. Consider $\beta_s \defeq  \frac{1}{2} + \frac{s}{2n}$. We would like to Taylor expand $q(\beta_s)$; first note that $q(0) = p^2$. Next, 
\begin{align*}
    q'(\beta) &= \frac{1}{2\pi} \partial_\beta \int_{x = -K}^K \int_{y = \frac{-K - (2\beta-1) x}{2\sqrt{\beta(1-\beta)}}} ^ {\frac{K - (2\beta-1) x}{ 2\sqrt{\beta(1-\beta)} }} \cE{-\frac{x^2+y^2}{2}}dydx \\
    &= \frac{1}{\pi  \sqrt{\beta(1-\beta)}} e^{-\frac{K^2}{2 (1-\beta)}} \left(e^{\frac{(2 \beta - 1) K^2}{2 \beta (1 - \beta) }}-1\right) \,.
\end{align*}
Note that $q'(1/2) = 0$. Differentiating again and then evaluating at $\beta = 1/2$ yields:
\begin{align*}
    q''(1/2) &=  \frac{8 e^{-K^2} K^2}{\pi } = 4p^2\pa{1 - \mu_2}^2\,,
\end{align*}
where we have adopted the following notation of \cite{ALS1}:
\begin{equation}\label{eq:mu2}
    \mu_2 \defeq  p^{-1}\int_{-K}^K x^2 e^{-x^2/2}\frac{dx}{\sqrt{2\pi}} = -p^{-1}\sqrt{\frac{2 }{\pi}}Ke^{-K^2/2} + 1\,.
\end{equation}
Thus, restricted to $|\beta_s| < \eps$ for some small constant $\eps > 0$ depending only on $K$, Taylor's expansion yields
\begin{equation}\label{eq:q-taylor}
    q(\beta_s) = p^2 \pa{1 + \frac{(1- \mu_2)^2(s^2/n)}{2n} + \cO{\frac{s^4}{n^4}} }\,.
\end{equation}
We are ready to bound \cref{eq:2mm-ratio}. Split the sum into three annular regions and treat them separately. (For the first region, highly similar computations appear in e.g. Lemma 3.1 in \cite{ALS1}; Lemma 7 of \cite{APZ}; or Lemma 3 of \cite{dja-jnw}.)
\begin{enumerate}
    \item ($1/2 - \del \le \beta \le 1/2$) Recall $\beta_s \defeq  1/2 + s/(2n)$. Let $\del > 0$ be some arbitrarily small constant that we fix later. For $\del$ sufficiently small, we apply Stirling's formula and \cref{eq:q-taylor}. Denote for shorthand $B \defeq  \sqrt{\alpha}(1-\mu_2)/2$. Counting over even-valued $s$, 
    \begin{align*}
        I_1 &\defeq  \sum_{|s| \le 2\del n } 2^{-n} \binom{n}{\frac{n+s}{2}} \pa{\frac{q(\beta_s)}{p^2}}^m \\
        &= \sqrt{\frac{2}{n\pi}}\sum_{|s| \le 2\del n } \cE{-\frac{s^2}{2n}\mO{\del}} \cE{\frac{\alpha}{2}(1-\mu_2)^2\pa{\frac{s^2}{n}} \mO{\del} }\\
        &= \sqrt{\frac{2}{n\pi}}\sum_{|s| \le 2\del n } \cE{-\frac{s^2}{2n}\pa{1 - 4B^2}\mO{\del}}e^{-2B^2}\,.
    \end{align*}
    Since $\delta$ can be taken arbitrarily small, it suffices to show $|B|$ is strictly less than $1/2$, uniformly for sufficiently large $n$. This follows immediately by combining \cref{eq:F''(1/2)} with \cref{eq:mu2}. Indeed, by \cref{eq:F''-ub}, there is some $\eps \defeq  \eps(K) > 0$ so that
    \[
        1 - 4B^2 = -\frac{1}{4}F''\pa{\frac{1}{2}} > \eps > 0 \,.
    \]
    Thus $|B|$ is strictly bounded from $1/2$, yielding 
    \begin{equation}\label{eq:2mm-exact}
        I_1 = \mO{\del} e^{-2B^2} \sqrt{\frac{1}{1-4B^2 } }   \,. 
    \end{equation}
    (The factor of $2$ from $\sqrt{2/(n\pi)}$ was cancelled by the fact that we were only summing over even $s$). Taking $\del$ sufficiently small, we obtain for some implicit constant depending only on $K$ that $I_1 = \cO{1}$. \\

    \item ($\delta \le \beta \le 1/2 - \del$)
    Let $I_2$ be 
    \begin{align*}
        I_2 &\defeq  \sum_{s:~2\delta n \le |s| < (1-2\delta)n } 2^{-n} \binom{n}{\frac{n+s}{2}} \pa{\frac{q(\beta_s)}{p^2}}^m \,.
    \end{align*}
    Applying Stirling's formula, 
    \begin{align*}
        I_2 &< n(2^{-n}p^{-2m}) \max_{\beta \in [\del,\frac{1}{2}-\del]} \cE{nF(\beta) + \cO{\log n}}  \\
        &= \max_{\beta \in [\del,\frac{1}{2}-\del]} \cE{n\pa{F(\beta) - F(1/2)} + \cO{\log n}} \,.
    \end{align*}
    It suffices to show the exponent is strictly negative and of order $n$ on this interval. Applying \cref{eq:F-max}, we have for some $\eps_0 \defeq  \eps_0(K,\delta) > 0$,
    \begin{align*}
        \max_{\beta \in [\del, 1/2-\del]} F(\beta) - F(1/2) &\le \max\cb{F(\del) - F(1/2),~F(1/2-\del)- F(1/2), -\eps} \le -\eps_0 \,.
    \end{align*}
    Thus for some implicit constant depending only on $K$ and $\delta$, $I_2 = \cE{-|\cT{n}|}$. \\
    
    \item ($0 \le \beta < \del$)
    Define $I_3$, the contribution of the remaining terms, by
    \begin{align*}
        I_3 &\defeq  \sum_{|s| > (1- 2\del) n} 2^{-n} \binom{n}{\frac{n+s}{2}} \pa{\frac{q(\beta_s)}{p^2}}^m \,.
    \end{align*}
    By Stirling's formula---and recalling we set $\alpha$ to be $\ac(K)$, where $\ac$ is given in \cref{eq:ac-def}---we also have the identity
    \begin{equation}\label{eq:2mm-outer-stirling}
        I_3 = \sum_{|s| > (1- 2\del) n} p^{-m}\cE{nF(\beta_s) + \cO{\log n}}   \,.
    \end{equation}
    
    Since we are able to take $\delta$ an arbitrarily small constant in all the previous parts of this proof, let $\delta$ be sufficiently small so that by \cref{lemma:free-energy}, $F$ is decreasing on $[0,\del]$. The idea is to say $F(\beta)$ is already extremely negative even for $\beta = 1/n$. Intuitively, such a dramatic decay is possible due to the ``frozen'' nature of typical solutions or, equivalently, the fact that $F'(\beta) \to -\infty$ as $\beta \to 0$ from above. Note the minor subtlety that $F$ being decreasing does not directly imply that the summand of $I_3$ is decreasing, due to the logarithmic error terms in Stirling's formula. Conveniently, $F(1/n)$ is negative enough to outweigh these lower-order corrections. 
    
    First computing the summand for $\beta = 0$, since $q(0) = p$, we have:
    \begin{align*}
        2^{-n} \binom{n}{0} \pa{\frac{q(0)}{p^2}}^m = 2^{-n} p^{-m} = 1 \,.
    \end{align*}
    Next, in order to compute the summand for $\beta = 1/n$, we claim: (c.f. Lemma 6 of \cite{PX} for a related result on the ``planted'' model)
    \begin{equation}\label{eq:q-decrease-0}
        \frac{ q\pa{1/n} }{ q\pa{0} } =  \frac{ q\pa{1/n} }{ p } \le 1 - \cOm{\frac{1}{\sqrt{n}} } \,.
    \end{equation}
    Indeed, let $\beta \defeq  1/n$ and define $\Delta \defeq  c_0n^{-1/2}$ for some new positive constant $c_0$ that we fix to be sufficiently small later. We have by definition of $q$:
    \begin{align*}
        q(\beta) &= \frac{1}{2\pi}\int_{-K}^{K} e^{-y^2/2} \int_{\frac{-K + (1-2\beta)y}{2\sqrt{\beta(1-\beta)}}}^{\frac{K + (1-2\beta)y}{2\sqrt{\beta(1-\beta)}}} \cE{-x^2/2}dxdy \\
        &\le \frac{1}{\sqrt{2\pi}}\int_{-K}^{K-\Delta} e^{-y^2/2} dy + \frac{1}{2\pi}\int_{K- \Delta}^{K} e^{-y^2/2} \int_{\frac{-K + (1-2\beta)y}{2\sqrt{\beta(1-\beta)}}}^{\frac{K + (1-2\beta)y}{2\sqrt{\beta(1-\beta)}}} \cE{-x^2/2}dxdy \\
        &\le p_K - \Delta \pa{ \frac{e^{-K^2/2}}{\sqrt{2\pi}} } + \frac{1}{2\pi} \int_{K-\Delta}^K e^{-y^2/2} \int_{\frac{-K + (1-2\beta)(K-\Delta)}{2\sqrt{\beta(1-\beta)}}}^{\infty} \cE{-x^2/2}dxdy \,.
    \end{align*}
    Examining the bounds of integration,
    \begin{align*}
        \frac{-K + (1-2\beta)(K-\Delta)}{2\sqrt{\beta(1-\beta)}} &= \frac{-2K\beta - \Delta}{ 2\sqrt{1/n} } \mO{\frac{1}{n}} = -\frac{\Delta \sqrt{n}}{2}\mO{\frac{1}{\sqrt{n}}} \,.
    \end{align*}
    Thus, we obtain in total:
    \begin{align*}
        q\pa{\frac{1}{n}} &\le p_K - \Delta \pa{ \frac{e^{-K^2/2}}{\sqrt{2\pi}} } + \frac{1 + \cO{n^{-1}} }{\sqrt{2\pi}} \int_{K-\Delta}^{K} e^{-y^2/2} \pa{ \frac{1}{2} + \int_{- \frac{c_0}{2} }^{0} \cE{-x^2/2} dx}dy  \\
        &=  p_K  + \Delta  \pa{- \frac{e^{-K^2/2}}{\sqrt{2\pi}} + \frac{e^{-K^2/2} } {2\sqrt{2\pi}} + \cT{c_0e^{-K^2/2}} } + \cO{\frac{1}{n}} \\
        &\le p_K - \cT{\frac{1}{\sqrt{n}}} \,.
    \end{align*}
    The last line follows by taking $c_0$ sufficiently small. We have thus established the desired bound \cref{eq:q-decrease-0}. Raising this inequality to the $m$'th power yields 
    \[
       q(1/n)^m \le p^{m}\cE{-\cOm{\sqrt{n}}} \,,
    \]
    which is far smaller than even what is needed. Indeed, returning to \cref{eq:2mm-outer-stirling} and recalling that $F$ is strictly decreasing, as well as noting crudely that $H(1/n) < n^{-.9}$,  we obtain:
    \begin{align*}
        I_3 &= \sum_{s = 0}^{\del n} p^{-m}\cE{nF(\beta_s) + \cO{\log n}}  \\
        &\le 2\pa{ 1 + \del n  \pa{\frac{q(1/n)}{p}}^{m} \cE{n^{.1} + \cO{\log n}} }\\
        &\le 2\pa{1 + \cE{-\cOm{\sqrt{n}}}} \,.
    \end{align*}
    Thus, $I_3 = 2 + \oo(1)$. 
\end{enumerate}
This completes our casework. Summing $I_1$, $I_2$, and $I_3$, we have shown that the second moment ratio \eqref{eq:2mm-ratio} is $\cO{1}$. By our previous considerations on the monotonicity of \cref{eq:2mm-ratio} with respect to $\alpha$, \cref{eq:2mm-goal1} is then established for all $\alpha\defeq \alpha(n) \le \ac$. By the Paley-Zygmund inequality, \cref{eq:2mm-goal2} follows immediately and the lemma is complete. 
\end{proof}

\section{Sharp Threshold}\label{section:UB-ST}

\subsection{Sketch of \cref{thm:sharp-transition} }\label{sketch}
By \cref{lemma:alpha-K} (``equivalence of perturbations to $\alpha_c$ and $K_c$''), \cref{eq:st-goal1} implies \cref{eq:st-goal2}. In order to establish  \cref{eq:st-goal1}, we need to refine \cref{thm:px} of Perkins and Xu to hold even for $\alpha$ allowed to vary with $n$. In particular, it should hold for $\alpha \le \alpha_0 \defeq  \ac - x n^{-1}\log(n)$. We also would like quantitative tail-bounds. Let us briefly survey the proof.

We follow an approach that Talagrand developed for the binary perceptron \cite{talagrand1999intersecting}. Talagrand's approach was carefully adapted to the setting of the symmetric perceptron by Perkins and Xu \cite{PX}. The outline is to build a constraint satisfaction problem by adding one clause at a time, and then show the logarithm of the number of solutions concentrates via martingale arguments. 

Reusing the notation of \cite{PX} for clarity of comparison, define a time-indexed process $(S_t)_{t}$ of the solutions to the first $t$ rows:
\begin{equation}\label{eq:s-def}
    S_t \defeq  \{\sigma \in \{\pm 1\}^n: |(A\sigma)_j| \le K,~ \forall~1 \le j \le t\}, \quad \E{|S_t|} = 2^n (p_K)^t\,.
\end{equation}
In terms of our previous notation, $S_{t}$ is equal to $Z_{K,\alpha}$ with $\alpha = t/n$. Denote the deviation of $\log|S_t|$ from its mean by $Q_t$, which we write as a telescoping sum:
\[
    Q_t \defeq  \log\pa{\frac{|S_t|}{\E{|S_t|}}} = \sum_{i=1}^t \left[ \log\pa{\frac{|S_i|}{\E{|S_i|}}} -  \log\pa{\frac{|S_{i-1}|}{\E{|S_{i-1}|}}} \right] \,.
\]
Rewriting $Q_t$ in terms of a martingale that we call $Y_t$, 
\[
    Q_t = \sum_{i=1}^t \log(1+Y_i), \quad Y_t \defeq  \frac{1}{p}\pa{\frac{|S_t|}{|S_{t-1}|} - p} \,.
\]
Then, as long as $Y_i$ is small for each $i \le \alpha_0 n$, we can use the Taylor Expansion
\[
    Q_t = \sum_{i=1}^t Y_i - \frac{Y_i^2}{2} + \cO{Y_i^3} \,.
\]
The $Y_i$ are centered and should be roughly $Y_i \asymp n^{-1/2}$, so that both the sum of the $Y_i$ and the sum of the $Y_i^2$ are constant order, and the sum of the $Y_i^3$ is vanishing. We then expect $|Q_t| = \cO{\log(n)}$ with exponential tails, which would yield the theorem. Indeed, $\log\E{\ba{S_{\alpha_0n}}}$ is at least $\cOm{x \log(n)}$, so if we take $x$ sufficiently large, then $|Q_t| \ll \log\E{\ba{S_{t}}}$ for all $t \le \alpha_0 n$ with very high probability. This would imply that the fluctuations of $|S_t|$ are much less than its mean, yielding the result. More precisely, we would have with high probability:
\begin{align*}
    \log|S_{\alpha_0n}| &=  \log\E{|S_{\alpha_0n}|} + \log \pa{\frac{|S_{\alpha_0n}|}{\E{|S_{\alpha_0n}|}}}\\
    &= \log \E{|S_{\alpha_0n}|} + Q_{\alpha_0 n} \\
    &\ge c x \log n - C \log n \\
    &\gg 0\,.
\end{align*}
The last line is achieved by taking $x$ sufficiently large, completing the theorem. \\

The analysis is by induction. The key idea for this induction is a geometric notion of regularity for $S_t$ that asserts solutions are sufficiently ``well-spread''. We begin by assuming for induction that $S_{t'}$ is \textit{regular} for all $t' \le t$ (precise definition given shortly). Then we follow with three estimates (c.f. Lemmas 11, 12, and 13 of \cite{PX} respectively). First, if $S_t$ is regular, then $Y_{t+1}$ enjoys good tail bounds. Second, if $Y_{t'}$ has good tails for all $t' \le t+1$, then martingale concentration yields exponential tail bounds for $Q_{t+1}$. Third, if $Q_{t+1}$ is small, then $S_{t+1}$ is also regular with good probability. 

We emphasize that this structure was innovated in \cite{talagrand1999intersecting} and refined in \cite{PX}. We will directly use the first estimate (tail bound on $Y_t$) from \cite{PX}. For the second and third estimates, we also significantly borrow from the structure of the proofs in \cite{PX}, albeit with exponential improvements. This concludes our sketch. \\

\subsection{Proof of \cref{thm:sharp-transition} (c.f. Theorem 9 of \cite{PX})} 

Let $S_t$, $Q_t$, and $Y_t$ be as defined in the sketch. Fix $K> 0$. All other constants in what follows will depend on $K$; we treat $K$ as a universal constant and suppress this dependence in our notation. Fix $x$ sufficiently large. Define $\alpha_0 \defeq  \alpha_0(n)$ by $\alpha_0n \defeq  \ac n - x\log(n)$. We emphasize that no constants in what follows will depend implicitly on $x$. Recall our convention that $c$ and $C$ denote generic constants that may vary between lines; important constants will have a subscript. \\

In order to use the martingale structure of $Y_t$, define the filtration $\mathcal{F}_t$ generated by revealing the first $t$ rows of the matrix $A$. Define the conditional measure $\PP[t]{\cdot}$ by
\begin{equation}
    \PP[t]{\cdot} := \PP{\cdot \,|\,\mathcal{F}_t}\,.
\end{equation}
With this notation, we are ready to formally state our main definitions and lemmas.

\begin{definition}[Regular] 
We say $S_t \subset \{\pm 1\}^n$ is \textbf{regular} if $S_t \neq \emptyset$ and, for $\sigma_t^{(1)}$, $\sigma_t^{(2)}$ drawn uniformly and independently (with replacement) from $S_t$,
\[
    \PP[t]{ \ba{ \innerprod{\sigma_t^{(1)}}{\sigma_t^{(2)}}} >   \sqrt{\creg n (x\log(n) + |Q_t|)}} \le n^{-c_rx} \,,  
\]
where $\creg$ and $c_r$ are some positive universal constants.
\end{definition}

The parameters $C_r$ and $c_r$ are explicitly chosen below in \cref{eq:C_r-def} and \cref{eq:c_r-def} respectively. Define two stopping times: $\tau_{S}$ the first time $t$ that $S_t$ is not regular and $\tau_Q$ the first time that $|Q_t|$ is large. Also define the first time $\tau$ that either of these ``bad'' events occurs. More precisely, for $C_q \in (0,1)$ some small universal constant (explicitly chosen in \cref{eq:C_q-def} below),
\begin{align*}
    \tau_S  &\defeq \min\cb{t:\,S_t \text{ is not regular}}\,, \\
    \tau_Q  &\defeq \min\cb{t:\,|Q_t| > C_q x \log(n)}\,, \\
    \tau &\defeq \tau_S \wedge \tau_Q \,.
\end{align*}
The first estimate of the induction is that if $S_t$ is regular then $Y_{t+1}$ is sub-exponential. Up to some trivial rewriting (we use $\tau_Q$ to suppress $|Q|$), the following is exactly Lemma 11 of \cite{PX}.
\begin{lemma}[$Y_{t+1}$ is small; \cite{PX}, Lemma 11]\label{lemma:Y-improved} There exist positive constants $c$ and $C$ so that for any $t$, for all sufficiently large $n$ and $y$,
\begin{equation}\label{eq:Y-tail}
    \ind\pa{\tau > t}\PP[t]{|Y_{t+1}| >  Cy \sqrt{\frac{(1 + C_qx)\log n}{n}}} \le \cE{- cy } \,.
\end{equation}
\end{lemma}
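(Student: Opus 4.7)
The plan is to compute the conditional even moments of $Y_{t+1}$ using the regularity of $S_t$ and the Taylor expansion of $q_K$, then convert these into a tail bound via Markov's inequality. Conditional on $\mathcal{F}_t$, the new row $a_{t+1}$ is an independent $N(0, n^{-1}\I)$ vector, so
\[
    Y_{t+1} \;=\; \frac{1}{p\ba{S_t}}\sum_{\sigma \in S_t}\pa{\ind\pa{\ba{\innerprod{a_{t+1}}{\sigma}} \le K} - p}
\]
is a conditionally centered empirical average of indicators, and in particular $|Y_{t+1}| \le 1/p$ deterministically. The conditional mean vanishes because $\innerprod{a_{t+1}}{\sigma}\sim N(0,1)$ for any fixed $\sigma\in\cb{\pm 1}^n$.

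Expanding the $2k$th moment as a sum over tuples $(\sigma_1,\dots,\sigma_{2k}) \in S_t^{2k}$, each summand is the joint probability that the Gaussian vector $(\innerprod{a_{t+1}}{\sigma_i})_i$ lands in a prescribed product of intervals, and depends on the tuple only through the pairwise overlaps. Two ingredients then control the sum. First, the Taylor expansion \eqref{eq:q-taylor} gives $q_K(\beta_{\sigma,\tau}) - p^2 = \cO{\innerprod{\sigma}{\tau}^2/n^2}$ whenever the overlap is not too large; a Wick-type grouping of the $2k$ indices into pairs (with the trivial $\cO{1}$ bound per factor absorbing contributions from atypical tuples) then yields, up to standard combinatorial factors counting pairings,
\[
    \E[t]{Y_{t+1}^{2k}} \;\le\; (Ck)^k \, \E{\pa{\frac{\innerprod{\sigma^{(1)}}{\sigma^{(2)}}}{n}}^{2k}},
\]
where $\sigma^{(1)},\sigma^{(2)}$ are independent uniform samples from $S_t$. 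Second, on $\{\tau > t\}$ we have $|Q_t| \le C_q x \log n$, so by $t$-regularity the pairwise overlap exceeds $\sqrt{\creg n(1+C_q) x \log n}$ with conditional probability at most $n^{-c_r x}$. Splitting this second expectation on the regular event and using $|\innerprod{\sigma^{(1)}}{\sigma^{(2)}}| \le n$ trivially on its complement --- with $c_r$ chosen so that $n^{2k}\cdot n^{-c_r x}$ is dominated uniformly across the range $k\lesssim \log n$ of interest --- the typical contribution wins, and
\[
    \E[t]{Y_{t+1}^{2k}} \;\le\; \pa{\frac{C' k(1+C_q x)\log n}{n}}^k.
\]

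Markov's inequality applied to this moment bound gives, for each $k\ge 1$,
\[
    \ind\pa{\tau > t}\,\PP[t]{\ba{Y_{t+1}} > Cy\sqrt{\frac{(1+C_q x)\log n}{n}}} \;\le\; \pa{\frac{C''k}{y^2}}^k,
\]
and optimizing $k\asymp y^2$ recovers the tail $\cE{-cy}$ of \eqref{eq:Y-tail} (in fact a subgaussian strengthening).

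The main technical obstacle is controlling the ``bad'' $2k$-tuples in the moment expansion, namely those containing a pair with atypically large overlap. The Wick grouping cleanly controls only the regular contribution, and the irregular part must be dominated uniformly across all $k$ up to $\cT{\log n}$ using only the single-pair regularity bound $n^{-c_r x}$. This is precisely what forces the quantitative interplay between $\creg$, $c_r$, and $C_q$ that is hardwired into the definitions of regularity and of the stopping times $\tau_S$ and $\tau_Q$; getting this balance right is the key place where the argument improves on the estimates of \cite{PX}.
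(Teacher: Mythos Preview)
The paper does not supply its own proof of this lemma: it is quoted directly from \cite{PX} (their Lemma~11), with only the cosmetic change of absorbing $|Q_t|$ via the stopping time $\tau_Q$. So there is no in-paper argument to compare against, and your last paragraph misidentifies this lemma as the place where the present paper improves on \cite{PX}; the refinements are in \cref{lemma:Q-improved} and \cref{lemma:tau-improved}, while \cref{lemma:Y-improved} is used as a black box.

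Your moment-method outline is in the right spirit, but it has a genuine gap in the range of $y$ it covers. Splitting $\E\big[(\innerprod{\sigma^{(1)}}{\sigma^{(2)}}/n)^{2k}\big]$ on the regular event, the irregular pairs contribute at most $n^{-c_r x}$, and this dominates the regular contribution $(C x\log n/n)^k$ as soon as $k$ exceeds roughly $c_r x$, which is a \emph{constant}. Optimizing $k\asymp y^2$ therefore only delivers \eqref{eq:Y-tail} for $y\lesssim\sqrt{x}$. But the lemma is invoked in the proof of \cref{lemma:Q-improved} with $y$ of order $\sqrt{n}/\operatorname{polylog}(n)$ (to get $\PP{E^c}\le e^{-n^{0.1}}$), so this is far from sufficient. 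You also cannot ``choose $c_r$'' to repair this: it is already fixed by \eqref{eq:c_r-def}, and in any case $n^{2k}\cdot n^{-c_r x}$ cannot be made small for $k\sim\log n$ when $c_r x$ is constant.

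A secondary imprecision: the ``Wick-type grouping'' for products of the centered indicators $\ind(|Z_i|\le K)-p$ of correlated Gaussians is not automatic; it requires, e.g., a Hermite expansion of the indicator together with the diagram formula, and what falls out is a product of $k$ independent pair covariances $(\E[\,\cdot\,])^k$ rather than a single $2k$th overlap moment $\E[(\,\cdot\,)^{2k}]$. These are the same order under regularity, so this is more a matter of exposition than a fatal flaw, but it should be stated correctly.
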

Next, we would like to say that $Q_t$ is small for $t \le \tau_S$. Only a first-moment bound on $|Q_t|$ is given in \cite{PX}; the refinement we require is a bound on the moment-generating function. This will both allow us take $t$ much closer to $\ac$ as well as yield strong tail bounds on $|S_t|$.

\begin{lemma}[$Q_{t+1}$ is small]\label{lemma:Q-improved} There exists a constant $c>0$ so that for all sufficiently large $n$ and any $t \le \alpha_0 n$,
\begin{equation}\label{eq:Q-tail-early}
    \PP{|Q_{ t \wedge \tau}| > C_qx \log n} \le n^{-cx} \,.
\end{equation}
\end{lemma}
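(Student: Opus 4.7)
The plan is to handle the two tails of $Q_{t\wedge\tau}$ separately. For the upper tail, I would observe that $|S_t|/\EE|S_t| = \prod_{i \le t}(1+Y_i)$ is a non-negative martingale starting at $1$, since $\EE[|S_i| \mid \mathcal{F}_{i-1}] = p|S_{i-1}|$ implies $\EE[1+Y_i\mid\mathcal{F}_{i-1}] = 1$. Doob's maximal inequality then immediately yields $\PP{\max_{t' \le t} Q_{t'} \ge C_q x \log n} \le n^{-C_q x}$, with no reference to $\tau$, which handles the upper tail of $Q_{t\wedge\tau}$.

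For the lower tail, the strategy is the Taylor expansion $\log(1+y) = y - y^2/2 + O(|y|^3)$. I would apply \cref{lemma:Y-improved} with $y \asymp x\log n$ and union bound over $i \le \alpha_0 n$, so that outside a probability $n^{-cx}$ event, the uniform estimate $\max_{i \le t\wedge\tau} |Y_i| \le M := Cx^{3/2}(\log n)^{3/2}/\sqrt{n} = o(1)$ holds. On this event, the Taylor expansion decomposes
\[
Q_{t\wedge\tau} = M_{t\wedge\tau} - \tfrac{1}{2}\sum_{i \le t\wedge\tau}Y_i^2 + R_{t\wedge\tau}, \qquad |R_{t\wedge\tau}| \le CM\sum_{i}Y_i^2,
\]
where $M_t := \sum_{i \le t}Y_i$ is a genuine martingale. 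The remaining goal is to show that both $|M_{t\wedge\tau}|$ and $\sum Y_i^2$ are at most $C_qx\log n/3$ with probability $\ge 1 - n^{-cx}$.

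The key structural input is that on the regular event $\{\tau > i-1\}$, the conditional variance
\[
\EE[Y_i^2\mid\mathcal{F}_{i-1}] = p^{-2}|S_{i-1}|^{-2}\sum_{\sigma,\tau \in S_{i-1}}\pa{q(\beta_{\sigma\tau}) - p^2}
\]
is $O(1/n)$, obtained by combining the Taylor expansion \cref{eq:q-taylor}, which gives $q(\beta) - p^2 = O(p^2\innerprod{\sigma}{\tau}^2/n^2)$ near $\beta = 1/2$, with the regularity control on pairs with large inner product. Summing, the predictable quadratic variation $V_{t\wedge\tau} = O(x\log n)$. Freedman's inequality for $M_{t\wedge\tau}$ with per-step bound $M$ and variation $V_{t\wedge\tau}$ then gives $\PP{|M_{t\wedge\tau}| > C_qx\log n/3} \le n^{-cx}$ for $C_q$ a sufficiently large absolute constant. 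An analogous Bernstein bound for the martingale $\sum(Y_i^2 - \EE[Y_i^2\mid\mathcal{F}_{i-1}])$, whose increments are bounded by $M^2$ with conditional variance at most $M^2 \cdot \EE[Y_i^2\mid\mathcal{F}_{i-1}]$, yields $\sum Y_i^2 \le V_{t\wedge\tau} + O(x\log n) \le C_qx\log n/3$. The cubic remainder is $|R_{t\wedge\tau}| = o(x\log n)$ since $M = o(1)$.

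The main obstacle is translating the geometric regularity condition into the analytic bound $V_{t\wedge\tau} = O(x\log n)$, which requires combining the regularity estimate on inner products with the strict quadratic behavior of $q$ at $\beta = 1/2$ (guaranteed by \cref{eq:F''(1/2)} and \cref{eq:F''-ub}). The remainder of the argument is standard exponential martingale concentration, but the exponential-in-$\log n$ tail (as opposed to the first-moment bound in \cite[Lemma 12]{PX}) requires applying \cref{lemma:Y-improved} at scale $y \asymp x\log n$ rather than $y = \Theta(1)$, a refinement made possible by the subexponential improvement already built into that lemma.
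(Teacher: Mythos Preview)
Your upper-tail argument via Doob's maximal inequality is correct and essentially matches the paper's use of the supermartingale $e^{\lambda Q_t}$ (Jensen plus optional stopping). The problem is in the lower tail.

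Your decomposition $Q_{t\wedge\tau} = M_{t\wedge\tau} - \tfrac12\sum Y_i^2 + R$ and the plan to bound each piece by $C_q x\log n/3$ runs into a budget problem at the quadratic sum. From the regularity condition (or equivalently from integrating \cref{lemma:Y-improved}) one only gets $\ind_{\tau > i}\,\EE_{i}[Y_{i+1}^2] \le C_0\, x\log n / n$ for some universal $C_0$; your claim of $O(1/n)$ is too strong, since regularity bounds $|\langle\sigma,\tau\rangle|^2$ only by $C_r n(x\log n + |Q|)\asymp n x\log n$, not by $n$. Hence the predictable variation $V_{t\wedge\tau}$ can be as large as $C_0\alpha_0\, x\log n$, and $\tfrac12\sum Y_i^2$ concentrates near $V_{t\wedge\tau}/2$. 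Nothing in the setup forces $C_0\alpha_0/2 < C_q$; on the contrary, the paper \emph{requires} $C_q$ to be small (see \cref{eq:C_q-def}, where $C_q \le C_z/2$ is needed so that $|Q_t| < C_q x\log n$ guarantees $|S_t|>0$). Your fix of taking ``$C_q$ a sufficiently large absolute constant'' is therefore unavailable, and without it the quadratic sum alone may already exceed the entire budget $C_q x\log n$, making the allocation impossible.

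The paper avoids this by working directly with the moment generating function $\EE[e^{-\lambda Q_{t\wedge\tau}}\ind_E]$, iterating the one-step bound to obtain $\le n^{\lambda^2 C x}$, and then \emph{optimizing over} $\lambda$. Choosing $\lambda \asymp C_q/C$ small makes the quadratic loss $\lambda^2 C x\log n$ dominated by the linear gain $\lambda C_q x\log n$ from Markov, yielding $c \asymp C_q^2/C > 0$ regardless of how small $C_q$ is. This Cram\'er--Chernoff optimization is precisely what absorbs the uncontrolled constant in $V$; the additive decomposition you propose has no analogous degree of freedom.
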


Assuming $S_{t}$ is regular, these lemmas yield that $|Y_{t+1}|$ and $|Q_{t+1}|$ are small. The induction is complete if $|Q_{t+1}|$ being small implies $S_{t+1}$ is regular.

\begin{lemma}[$S_{t+1}$ is regular; $\tau$ is large]\label{lemma:tau-improved}
There exists a constant $c>0$ so that for all sufficiently large $n$,
\[
    \PP{\tau_S \le \alpha_0n} < n^{-cx}  \,.
\]
\end{lemma}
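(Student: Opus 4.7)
The plan is to prove \cref{lemma:tau-improved} by induction, showing $\PP{\tau \le t} \le n^{-cx}$ for all $t \le \alpha_0 n$ where $\tau := \tau_S \wedge \tau_Q$; the lemma's conclusion then follows because $\cb{\tau_S \le t} \subseteq \cb{\tau \le t}$. The base case $\PP{\tau \le 0} = 0$ is immediate: $Q_0 = 0$, while regularity of $S_0 = \cb{\pm 1}^n$ reduces to a Hoeffding estimate for $\innerprod{\sigma_1}{\sigma_2}$ with $\sigma_1, \sigma_2$ uniform on $\cb{\pm 1}^n$, which trivially yields $n^{-\creg x /2}$ after choosing $\creg$ large. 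At each step the two failure modes are $\tau_Q = t+1$, handled directly by \cref{lemma:Q-improved} applied at $t+1$, and $\tau_S = t+1$, where the real work is.

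To bound failure of regularity at time $t+1$, I would first replace the data-dependent threshold $R_{t+1}$ with the deterministic $R_\ast := \sqrt{\creg n x \log n} \le R_{t+1}$. On the event $\tau_Q > t+1$ we have $|S_{t+1}|^2 \ge n^{-2C_q x} \E{|S_{t+1}|}^2$, so failure of regularity forces
\[
    \ba{ \cb{(\sigma_1, \sigma_2) \in S_{t+1}^2 : \ba{\innerprod{\sigma_1}{\sigma_2}} > R_\ast } } > n^{-(c_r + 2 C_q)x} \E{|S_{t+1}|}^2 \,.
\]
Markov's inequality then bounds the probability of this event by $n^{(c_r + 2 C_q)x}$ times the restricted second moment ratio
\[
    \mathcal{R}_{t+1} := 2^{-n} \sum_{\ba{s} > R_\ast} \binom{n}{(n+s)/2} \pa{q(\beta_s)/p^2}^{t+1} \,.
\]
Following the decomposition from \cref{thm:2mm}, the Gaussian region $R_\ast < |s| \le 2\delta n$ contributes at most $\cE{-c R_\ast^2/n} = n^{-c \creg x}$ via Taylor expansion and Stirling; the middle region is $\cE{-\cOm{n}}$ by \cref{eq:F-max}; and the near-colinear region $|s| > (1-2\delta) n$ is dominated by $|s| = n$, evaluating to $2^{-n}(1/p)^{t+1} \le n^{-x |\log p_K|}$ using $t+1 \le \alpha_0 n = \alpha_c n - x \log n$. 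Terms with $|s|$ strictly between $(1-2\delta)n$ and $n$ contribute super-exponentially less, thanks to the sharp gap $q(1/n)/p \le 1 - \cOm{n^{-1/2}}$ already established inside the proof of \cref{thm:2mm} together with monotonicity of $F$ near the boundary from \cref{lemma:free-energy}.

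For the combined bound to close, the constants must satisfy $c_r + 2 C_q < |\log p_K|$ (so the colinear contribution beats the Markov threshold) and $c \creg > c_r + 2 C_q$ (so the Gaussian contribution does). Since $|\log p_K|$ depends only on $K$, I would first pin down $c_r$ and $C_q$ small relative to $|\log p_K|$, then take $\creg$ sufficiently large. The upshot is $\PP{\tau > t, \tau_S = t+1, \tau_Q > t+1} \le n^{-cx}$, and combining with \cref{lemma:Q-improved} gives $\PP{\tau \le t+1} \le \PP{\tau \le t} + \cO{n^{-cx}}$. Iterating over the $\cO{n}$ values of $t$ and absorbing the extra $n$ into the exponent for $x$ sufficiently large yields $\PP{\tau \le \alpha_0 n} \le n^{-c'x}$, as needed.

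The main obstacle is the quantitative handling of the near-colinear contribution to $\mathcal{R}_{t+1}$. At $\alpha = \alpha_c$ it equals exactly $1$, so all of the savings at $\alpha_0 = \alpha_c - x n^{-1} \log n$ come from the single polynomial factor $p_K^{x \log n} = n^{-x|\log p_K|}$; this is precisely the reason one cannot push $\alpha$ all the way to $\alpha_c$ by this route and why the $\log n$ width in the statement cannot be removed by refining the present argument. The compatibility constraint $c_r + 2 C_q < |\log p_K|$ that this forces is also what dictates the size of the regularity tolerance and the $|Q|$-window --- which in turn must be large enough for the martingale estimate of \cref{lemma:Q-improved} and the sub-exponential bound on $Y_{t+1}$ from \cref{lemma:Y-improved} to go through, so the three parameters $c_r, C_q, \creg$ must be chosen in a single compatible block at the outset.
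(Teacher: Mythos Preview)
Your argument is correct and shares the same skeleton as the paper's: union-bound over $t$, split the first failure into $\tau_Q$ (handled by \cref{lemma:Q-improved}) versus $\tau_S$, and control the latter through the restricted second moment of $|S_t|$. The implementation of that last step is where you diverge. The paper packages the restricted pair count into an MGF inequality,
\[
    \E{\cE{\tfrac12 R_t(\mathcal{O}^t) + 2Q_t}} \;\le\; \E{|S_t|^2}/\E{|S_t|}^2 \;\le\; C\,,
\]
invoking \cref{thm:2mm} for the last bound, and then applies Markov twice; the weight $e^{2Q_t}=|S_t|^2/\E{|S_t|}^2$ absorbs the random normalisation automatically. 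You instead freeze $|S_{t+1}|^2 \ge n^{-2C_qx}\E{|S_{t+1}|}^2$ deterministically on $\{\tau_Q>t+1\}$ and bound the restricted ratio $\mathcal{R}_{t+1}$ at the single scale $R_\ast$ by re-running the three-region computation from the proof of \cref{thm:2mm}. Your route is slightly more elementary---no MGF integral, and \cref{thm:2mm} is not used as a black box---and it makes the colinear bottleneck $2^{-n}p^{-(t+1)}\le n^{-x|\log p_K|}$ (the precise obstruction to removing the $\log n$) explicit; the paper hides the same quantity inside $R_t(\lambda)=c_f(\lambda^2\wedge L_t)$, since $e^{-L_t}$ is exactly this colinear term. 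The price is the explicit compatibility check $c_r+2C_q<|\log p_K|$, which the paper's bookkeeping replaces by the analogous $c_fC_r>2C_q$. One small point to watch: for the Gaussian and middle regions you should either argue directly at $\alpha=(t+1)/n$ or invoke termwise monotonicity in $\alpha$ of $(q/p^2)^{\alpha n}$ to pass to $\alpha_c$, since \cref{eq:F-max} is stated only at $\alpha_c$; either way the bound goes through uniformly in $t$.
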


We prove these lemmas in the next subsection. Let us first formally check that they imply \cref{thm:sharp-transition}, our desired bound on the upper tail of $\disc(A)$. 
\begin{proof}[Proof of \cref{thm:sharp-transition}]
For convenience, define the constant $c_K := -\log(p_K) > 0$. Since $K$ is fixed, $c_K$ may be treated as a universal constant. By \cref{eq:Z-alpha}, 
\begin{equation}\label{eq:C_q-constraint}
    \log\E{|S_t|} = c_K(\alpha_c n - t) \ge c_K x\log n,\quad \forall\, t \le \alpha_0n \,.
\end{equation}
We will eventually choose $C_q$ explicitly (in \cref{eq:C_q-def}) so that $C_q \le c_K/2$. Thus, on the event $\cb{\tau_Q > t}$, we have $|S_t| \gg 0$ almost surely. Indeed, as sketched in \cref{sketch},
\begin{align*}
    \log|S_{t}| = Q_{t} + \log \E{|S_{t}|} \ge -C_q x \log n + c_K x \log n \ge \frac{c_K}{2}x \log n\,,
\end{align*}
where the equality is by definition of $Q_t$ and the first inequality holds almost surely on the event $\cb{\tau_Q > t}$.

The theorem will follow if the event $\cb{\tau_Q > \alpha_0 n}$ holds with very high probability. By \cref{lemma:tau-improved}, with probability at least $1 - n^{-cx}$, we have $\cb{\tau_S > \alpha_0 n}$, i.e. the solution space is regular simultaneously for all $t \le \alpha_0 n$. Then, by \cref{lemma:Q-improved},
\begin{align*}
    \PP{\tau_Q \le \alpha_0 n} &\le \PP{\tau_S \le \alpha_0 n} + \sum_{t = 1}^{\alpha_0 n}\PP{\tau_S > \alpha_0 n, \tau_Q = t } \\
    &\le  n^{-cx} + \sum_{t = 1}^{\alpha_0 n} \PP{|Q_{t \wedge \tau}| > C_q x \log n } \\ 
    &\le n^{-cx}\,.
\end{align*}
In summary, we have shown:
\[
    \PP{|Z_{K,\,\ac n - x\log n}| = 0} \le \PP{|S_{\alpha_0n}| \le e^{\frac{c_K}{2} x\log n}} \le  \PP{\tau_Q \le \alpha_0 n} \le n^{-cx}\,.
\]
\end{proof}

\subsection{Proofs of Lemmas}

\begin{proof}[Proof of \cref{lemma:Q-improved}]
Fix $t \le \alpha_0n$ and define the event $E$ that all the $Y_i$ are bounded for $i \le t \wedge \tau$: 
\[
    E := \bigcap_{i \le t-1}~\cb{ \ind_{\tau > i} |Y_{i+1}| < n^{-.4}}\,.
\] 

By \cref{lemma:Y-improved} and union bound, the event $E$ holds with probability at least $1 - n^{-C_E}$ for any (arbitrarily large) constant $C_E$. Let $\lambda \in (0,1)$, where $\lambda$ does not vary with $n$ and will be taken sufficiently small later; our goal is to show
\begin{equation}\label{eq:Q-pre-goal}
    \E{\cE{\lambda|Q_{ t \wedge \tau }|} \ind_E} \le n^{\lambda^2 Cx} \,.
\end{equation}
Let us simplify \cref{eq:Q-pre-goal}. Since $e^{Q_t}$ is a martingale for all $t$ by construction and $\lambda \in (0,1)$, the optional stopping theorem and Jensen's inequality yield
\[
    \E{\cE{\lambda Q_{t\wedge \tau }} } \le 1\,.
\]
Thus, to establish \cref{eq:Q-pre-goal}, it suffices to show:
\begin{equation}\label{eq:Q-mgf-goal}
    \E{\cE{-\lambda Q_{t \wedge \tau }} \ind_E} \le n^{\lambda^2 Cx}.
\end{equation}
By a Taylor expansion, we have on the event $E$ that
\begin{equation}\label{eq:Y-small-taylor}
    \cE{-\lambda \log(1+Y_i)} \le 1 - C\lambda Y_i + C\lambda^2 Y_i^2, \quad \forall\, i \le t \wedge \tau \,.
\end{equation}
Additionally, by integrating the tail bound of \cref{lemma:Y-improved}, we obtain for some new positive constant $C > 0$: 
\begin{equation}\label{eq:Y^2}
    \ind_{\tau > i}\E[i]{Y_{i+1}^2} \le C n^{-1} (1 + C_qx)\log n, \quad \forall\, i \,.
\end{equation}

Recall that $(Y_i)_i$ is a martingale under the filtration generated by revealing the rows of $A$. Combining \cref{eq:Y-small-taylor} and \cref{eq:Y^2} yields
\begin{align*}
    &\E{\cE{-\lambda Q_{t\wedge \tau}} \ind_E}  \\ &= \E{ \cE{- \lambda Q_{(t-1) \wedge \tau} - \ind_{\tau > t-1} \lambda \log(1 + Y_t)}  \ind_E } \\
    &= \E{ \cE{- \lambda Q_{(t-1) \wedge \tau}} \E[t-1]{ \cE{ -\ind_{\tau > t-1} \lambda \log(1 +  Y_t) } }  \ind_{E} }  \\
    &\le \E{ \cE{- \lambda Q_{(t-1) \wedge \tau}} \pa{1 -  C \ind_{\tau > t-1} \lambda \E[t-1]{Y_t } + C \ind_{\tau > t-1} \lambda^2 \E[t-1]{Y_t^2} } \ind_E }  \\
    &\le \E{\cE{- \lambda Q_{(t-1)\wedge \tau}} \ind_E \pa{1 + \lambda^2 \frac{C(1 + C_qx)\log n)}{n}}} \,.
\end{align*}
We have used that the linear term of $Y$ in \cref{eq:Y-small-taylor} vanishes in expectation. Since $Q_0 = 0$ by definition, we obtain the desired result \eqref{eq:Q-mgf-goal} by an easy induction:
\begin{equation*}
    \E{\cE{-\lambda Q_{t\wedge \tau}} \ind_E} \le \pa{1 + \lambda^2 \frac{C(1 + C_qx)\log n}{n}}^t \le n^{\lambda^2 C(1 + C_qx)} \le n^{\lambda^2 Cx}\,,
\end{equation*}
where the last inequality follows for some new constant $C$, since $C_q$ is an explicit universal constant independent of $x$, and $x$ is assumed to be sufficiently large. Finally, fix $\lambda = \min\cb{ C_q/(2C),~1/2}$, so that $\lambda \in (0,1)$ as promised. Then
\begin{align*}
    \PP{|Q_{t\wedge \tau}| > C_q x \log(n)} \le n^{\lambda^2 Cx} n^{-\lambda C_q x} + \PP{E^c} &\le \min\cb{n^{-C_q^2x/(2C)},~ n^{-C_qx/2}} + n^{-C_E} \\
    &=: n^{-cx} \,.
\end{align*}
The last line follows by taking $C_E$ arbitrarily large. 
\end{proof}

\begin{proof}[Proof of \cref{lemma:tau-improved}] By union bound and the tower property, 
\begin{align*}
    \PP{\tau \le \alpha_0 n} &= \sum_{t = 1}^{\alpha_0n} \PP{\tau = t} \\
    &\le \sum_{t = 1}^{\alpha_0n} \PP{\tau_S = t,~\tau_Q > t} + \PP{\tau_Q = t,~ \tau > t - 1} \\
    &\le n^{-cx} + \sum_{t = 1}^{\alpha_0n} \PP{\tau_S = t,~\tau_Q > t}
\end{align*}
The last line follows by \cref{lemma:Q-improved}. It then suffices to show each summand is bounded above by $n^{-cx}$. Formally, our goal is:
\begin{equation}\label{eq:tau-goal}
    \PP{\tau_S = t,~\tau_Q > t} \le n^{-cx},\quad \forall t \, \le \alpha_0 n\,.
\end{equation}

By \cref{lemma:free-energy}, there exists some constant $c_f > 0$ so that for all $n$ sufficiently large,
\begin{equation}
    \max_{\beta \in [0, 1/2 - \lambda n^{-1/2}]} F_{K,\,t/n,\,n}(\beta) \le -c_f \min \cb{\lambda^2,\, \E{|Z_{K,\,t/n}}} \,.
\end{equation}
As a convenient short-hand, we write
\begin{equation*}
    R_t(\lambda) \defeq  c_f(\lambda^2 \wedge L_t), \quad L_t \defeq  \log\pa{ \E{|Z_{K,t/n}} } \,.
\end{equation*}
Then, for two vectors $\sigma_t^{(1)}$ and $\sigma_t^{(2)}$ drawn uniformly and independently from $S_t$, it can easily be extracted from the proof of \cref{thm:2mm} that:
\begin{equation*}
    \E{\ba{\cb{ (\sigma_t^{(1)}, \sigma_t^{(2)}) \in S_t:~ \ba{  \innerprod{\sigma_t^{(1)}}{\sigma_t^{(2)}} } \ge \lambda\sqrt{n} } }} \le  \cE{-R_t(\lambda)} \E{|S_t|^2} \,.
\end{equation*}
In particular, since we showed in \cref{thm:2mm} that there exists a positive constant $C$ so that $\E{|S_t|^2} \le C\E{|S_t|}^2$ uniformly for all $t$, 
\begin{align}
    \E{ \PP[t]{\ba{  \innerprod{\sigma_t^{(1)}}{\sigma_t^{(2)}} } \ge \lambda\sqrt{n} } \frac{|S_t|^2}{\E{|S_t|}^2} } \le  C \cE{-R_t(\lambda)} \,. \label{eq:E-pairs}
\end{align}
We would now like to show an exponential tail bound on the overlap of a pair of solutions. Let us give this a short-hand name for convenience:
\[
        \mathcal{O}^t \defeq  \frac{1}{\sqrt n} \ba{  \innerprod{\sigma_t^{(1)}}{\sigma_t^{(2)}} } \,.
\]
Recall that by definition $\cE{Q_t} = |S_t|/\E{|S_t|}$. Then,
\begin{align*}
 &\E{\cE{\frac{R_t(\mathcal{O}^t)}{2} + 2Q_t}} \\
 &= \E{ \cE{\frac{1}{2}\pa{\frac{c_f\innerprod{\sigma_t^{(1)}}{\sigma_t^{(2)}}^2} {n} \wedge L_t} + 2Q_t} } \\
 &= \E{\int_0^{\infty} \PP[t]{ \cE{ \frac{1}{2}\pa{\frac{c_f\innerprod{\sigma_t^{(1)}}{\sigma_t^{(2)}}^2} {n} \wedge L_t}  } \ge y }dy \frac{|S_t|^2}{\E{|S_t|}^2}} \\
 &= \E{\int_0^{\cE{L_t/2}} \PP[t]{ \cE{ \frac{c_f}{2n} \innerprod{\sigma_t^{(1)}}{\sigma_t^{(2)}}^2} \ge y }dy \frac{|S_t|^2}{\E{|S_t|}^2}} \,.
\end{align*}
From the second to the third line, we have conditioned on $S_t$. Applying the change of variables $z \defeq  \sqrt{2\log(y)/c_f}$ and then \cref{eq:E-pairs},
\begin{align}
    \E{\cE{\frac{R_t(\mathcal{O}^t)}{2} + 2Q_t}} &= \int_0^{\sqrt{L_t/c_f }}c_f ze^{c_f z^2/2} \E{\PP[t]{ \ba{\innerprod{\sigma_t^{(1)}}{\sigma_t^{(2)}}} \ge z \sqrt{n}}  \frac{|S_t|^2}{\E{|S_t|}^2}  } dz \nonumber \\
    &\le C\int_0^{\sqrt{L_t/c}}c_f ze^{-c_f z^2/2} dz \nonumber\\
    &\le C \,. \label{eq:mgf-overlap}
\end{align}
We would like to apply Markov's inequality using \cref{eq:mgf-overlap}. Recall \cref{eq:C_q-constraint}, namely that there exists some universal constant $c_K >0$ with
\[
    L_t \ge c_K x \log (n), \quad \forall t \le \alpha_0 n\,.
\]
We have not yet fixed $C_q$ up to this point; here we will need $C_q \ll C$, e.g. $C_q  \le C/10$. For the proof of \cref{thm:sharp-transition}, we also would like $C_q < c_K$. Thus, set: 
\begin{equation}\label{eq:C_q-def}
    C_q \defeq  \min\cb{ C/10,~c_K/2 }\,.
\end{equation}
In order to establish \cref{eq:tau-goal}, note that if $\cb{\tau_Q > t}$, we have 
\begin{equation}\label{eq:Q-L_t}
    |Q_t|< .1L_t
\end{equation}
We also have yet to fix $\creg$, one of the two parameters in the definition of regular. Set 
\begin{equation}\label{eq:C_r-def}
    \creg \defeq  2C/c_f \vee 1 \,.
\end{equation} 
(We only take the maximum with one for notation convenience later). 
On the event $\cb{\tau_Q > t}$, this choice yields
\begin{align}
     R_t\pa{\sqrt{\creg x  \log (n)+ |Q_t|}} + 2Q_t &> \min\cb{ (c_f\creg-2C_q)x\log (n),~ .9 c_f L_t } \nonumber \\
     &\ge .9 R_t\pa{\sqrt{\creg x  \log (n)}} \,. \label{eq:Q<L}
\end{align}
Since $R_t(\lambda)$ is weakly increasing in $\lambda$, we have:
\begin{align}
        &\PP{\ba{\innerprod{\sigma_t^{(1)}}{\sigma_t^{(2)}}} > \sqrt{n(\creg x  \log (n)+ |Q_t|)} ,~ \tau_Q > t} \nonumber \\ &\le \PP{\cE{ R_t(\mathcal{O}^t) + 2Q_t } > \cE{  R_t\pa{ \sqrt{\creg x  \log (n)+ |Q_t|} } + 2Q_t }, ~ \tau_Q > t }\,.  \label{eq:overlap-markov} 
\end{align}
By tower property, and then applying Markov's inequality to \cref{eq:overlap-markov} via \cref{eq:mgf-overlap} and \cref{eq:Q<L}, 
\begin{align*}
    &\E{\ind_{\tau_Q > t}\PP[t]{\ba{\innerprod{\sigma_t^{(1)}}{\sigma_t^{(2)}}} > \sqrt{n(\creg x  \log (n)+ |Q_t|)}  }} \\
    &= \PP{\ba{\innerprod{\sigma_t^{(1)}}{\sigma_t^{(2)}}} > \sqrt{n(\creg x  \log (n)+ |Q_t|)} ,~\tau_Q > t} \\
    &\le  C e^{-.9R_t\pa{\sqrt{\creg x  \log (n)+ |Q_t|}}} \,.
\end{align*}
By a final application of Markov's inequality,
\begin{align}
    &\PP{  \PP[t]{\ba{\innerprod{\sigma_t^{(1)}}{\sigma_t^{(2)}}} >  \sqrt{n(\creg x  \log (n)+ |Q_t|)} } > e^{-R_t\pa{\sqrt{C_rx  \log (n)}}/2} ,~  \tau_Q > t} \\
    &\le \cO{e^{-\frac{2}{5}R_t\pa{\sqrt{C_rx  \log (n)}}}} \nonumber \\
    &\le  C \pa{ n^{-cx} + e^{-c L_t}} \,. \label{eq:reg-mean}
\end{align}
In conclusion, since $\E{|Z_{k,t}|} \ge Cx\log(n)$ for all $t \le \alpha_0n$, we may simply upper bound $e^{-cL_t}$ by $n^{-cx}$ and also set the constant $c_r$ from the definition of $S_t$ being regular to
\begin{equation} \label{eq:c_r-def}
    c_r \defeq  c_fC_r/2\,.
\end{equation}
With this notation, \cref{eq:reg-mean} implies our goal \cref{eq:tau-goal}: since $C_r \ge 1$,
\begin{align*}
    &\PP{\tau_S = t,\, \tau_Q > t} \\&:= \PP{  \PP[t]{\ba{\innerprod{\sigma_t^{(1)}}{\sigma_t^{(2)}}} >  \sqrt{\creg n(x  \log (n)+ |Q_t|)} } > n^{-c_rx} ,~  \tau_Q > t}  \\
    &\le \PP{  \PP[t]{\ba{\innerprod{\sigma_t^{(1)}}{\sigma_t^{(2)}}} >  \sqrt{n(\creg x  \log (n)+ |Q_t|)} } > e^{-R_t\pa{\sqrt{C_qx  \log (n)}}/2} ,~  \tau_Q > t}  \\
    &\le n^{-cx}\,.
\end{align*}
Taking $x$ sufficiently large immediately yields the lemma .
\end{proof}

\section{Lower Bound on Fluctuations}\label{section:LB}

\begin{lemma}\label{lemma:anticonc}
Fix positive $\alpha$ and let $\kc := \kc(\alpha)$, $Z_{K} := Z_{K,\alpha}$. There exists a positive constant $C$ such that for $K$ with $0 < K \le 2 K_c$, $n$ sufficiently large, and positive $\eps > 0$ sufficiently small: 
\begin{equation}\label{eq:anticonc}
    \PP{K - \frac{\eps}{n}  \le \disc(A) \le K } < C \eps~ \E{|Z_{K } |}  \,.
\end{equation}
\end{lemma}
As an easily corollary, we obtain a lower bound on the variance of $\disc(A)$, completing our main result \cref{thm:main}. In fact, we obtain a lower bound on fluctuations, which is strictly stronger than a bound on the variance. 

\begin{definition}[Fluctuations]\label{def:fluct-LB} We say that a sequence of random variables $X_n$ has fluctuations at least of order $\sigma_n$ if there exist constants $\del > 0$ and $C>0$ such that, for $n$ sufficiently large and any $|a-b| > C\sigma_n$, 
\[
    \PP{a < X_n < b} < 1 - \del \,.
\]
\end{definition}

\begin{corollary}\label{cor:lb}
    The fluctuations of $\disc(A)$ are at least  order $1/n$.  
\end{corollary}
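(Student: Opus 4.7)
The plan is to show that for some constants $\delta, c_0 > 0$ and all $n$ sufficiently large, any interval $(a,b)$ of width at most $c_0/n$ satisfies $\PP{\disc(A) \in (a,b)} \le 1-\delta$. I will do a two-case analysis based on whether $a \ge \kc$ or $a < \kc$, leveraging only the anti-concentration Lemma \ref{lemma:anticonc}, the first moment expansion \cref{prop:Z-expansion-K}, and the lower bound on $\PP{\disc(A) \le \kc}$ from Theorem \ref{thm:2mm}.

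In the first case, suppose $a \ge \kc$. Then Theorem \ref{thm:2mm} immediately yields $\PP{\disc(A) \le a} \ge \PP{\disc(A) \le \kc} \ge C_0^{-1}$ for some constant $C_0 > 0$ depending only on $\alpha$. This gives $\PP{\disc(A) \in (a,b)} \le 1 - C_0^{-1}$, which is the desired bound on this range of parameters.

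In the second case, suppose $a < \kc$, so that necessarily $b \le \kc + c_0/n$. The key observation is that by \cref{prop:Z-expansion-K}, in this regime
\[
\E{|Z_b|} \le \exp\!\bigl(C \alpha n \cdot (c_0/n)\bigr) = \exp(C \alpha c_0)\,,
\]
which is bounded uniformly in $n$ for $c_0$ fixed. I then apply Lemma \ref{lemma:anticonc} directly with $K = b$ and $\eps = c_0$ (the hypothesis $b \le 2\kc$ is trivially met for $c_0$ small), obtaining
\[
\PP{\disc(A) \in (a, b)} \le \PP{b - c_0/n \le \disc(A) \le b} \le C c_0 \E{|Z_b|} \le C c_0 \, e^{C \alpha c_0}\,.
\]
Choosing $c_0$ sufficiently small (depending only on $\alpha$ and the constant in \cref{lemma:anticonc}) makes the right-hand side at most $1/2$. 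Setting $\delta := \min\{C_0^{-1},\, 1/2\}$ combines the two cases.

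The whole argument is essentially a mechanical case split; there is no genuine obstacle, since all the substantive probabilistic work has been done by Lemma \ref{lemma:anticonc}. The only point worth noting is that the first moment expansion \cref{prop:Z-expansion-K} is precisely what makes the anti-concentration bound useful at the target scale $1/n$: it certifies that $\E{|Z_b|}$ remains $O(1)$ throughout a window of width $O(1/n)$ around $\kc$, so the $\eps \E{|Z_K|}$ in Lemma \ref{lemma:anticonc} remains small when $\eps = c_0$ is a small constant.
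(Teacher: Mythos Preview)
Your proof is correct and follows essentially the same approach as the paper's own argument: a case split on whether $a \ge \kc$ or $a < \kc$, invoking \cref{thm:2mm} in the first case and \cref{lemma:anticonc} together with the first-moment expansion \cref{prop:Z-expansion-K} (to bound $\E{|Z_b|}$ uniformly on a window of width $\cO{1/n}$ around $\kc$) in the second. The paper's proof differs only cosmetically, writing the first-moment bound as $\E{|Z_b|} \le 1 + Ce^{Cn(b-\kc)_+}$ rather than your $\exp(C\alpha c_0)$.
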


\begin{proof}[Proof of \cref{cor:lb}]
By \cref{thm:2mm}, we know that for some $\delta > 0$
\[
    \PP{\disc(A) \le K_c(\alpha)} > \delta > 0\,.
\]
So, for any $a$ and $b$ possibly functions of $n$, with $K_c \le a \le b$, we have uniformly
\[
    \PP{\disc(A) \in [a,b]} < 1-\delta\,.
\]
All that remains is the case that $a \defeq  a(n)$ and $b\defeq b(n)$ with $a < b$ and $a \le K_c$. Assume $|a-b| \le \eps/n$ for some sufficiently small constant $\eps > 0$ we choose shortly. By definition of $K_c$ as well as the expansion \cref{eq:Z-expansion-K}, we have for some $C>0$,
\[
    \E{|Z_{b}|} \le Ce^{Cn(b-\kc)_+} \le C \cE{C\eps}\,.
\]
By \cref{eq:anticonc}, for $\eps$ sufficiently small,
\[
    \PP{\disc(A) \in [a,b]} < C\eps \cE{C\eps} \le \frac{1}{2}\,.
\]
\end{proof}

\begin{proof}[Proof of \cref{lemma:anticonc}]
Let $\Xi$ be the set of solutions with discrepancy inside an $\eps/n$ window around $K$, namely
\[
    \Xi \defeq  \Xi_{K,\eps} \defeq  \pa{Z_{K-\eps/n}}^c \cap Z_{K}\,.
\]
We will show $\Xi$ is empty with positive probability by Markov's inequality. Let $T_i(x)$ denote the event that row $i$ is \textit{tight}, namely
\[
    T_i \defeq  \{|(Ax)_i| \in I_{K,\eps}\}, \quad I_{K,\eps} \defeq  [K-\eps/n,~ K]\,.
\]
Correspondingly, define the number of tight rows $T$ by 
\[
    T(x) \defeq  \sum_{i=1}^{\alpha n} \ind(T_i(x))\,.
\]
Let $x$ be an arbitrary vector in $\{\pm 1\}^n$. Conditioning on $T$ and using the independence of the rows of $A$, we have for some sufficiently large  $C_0 \defeq  C_0(\alpha) > 0$ that
\begin{align*}
    \E{|\Xi|} &= 2^n \sum_{t = 0}^{\alpha n} \PP{x \in \Xi| T(x) = t} \PP{T(x) = t} \\
    &\le 2^n \sum_{t = 1}^{\alpha n}  \binom{{\alpha n}}{t}\PP{(Ax)_1 \in I_{K,\eps}}^t  \PP{\left|(Ax)_1\right| < K}^{{\alpha n}-t} \\
    &\le 2^n \sum_{t = 1}^{\alpha n} \binom{{\alpha n}}{t}\pa{\frac{C_0\eps}{n} p_K}^t p_K^{{\alpha n}-t}\,.
\end{align*}
where, as usual, $p_K = \PP{|Z| \le K}$ for $Z$ a standard Gaussian. Note that we are able to let $C_0$ only depend on $\alpha$ because $K$ is bounded above by a constant, namely $2K_c$. Thus, by the binomial theorem, for some other sufficiently large $C \defeq  C(\alpha) > 0$, 
\begin{align*}
    \E{|\Xi|} &\le (2^{n} p_K^{\alpha n}) \sum_{t = 1}^{\alpha n} \binom{{\alpha n}}{t}\pa{\frac{C_0\eps}{n}}^t 1^{{\alpha n}-t} = \E{|Z_{K}|}\pa{\pa{1 + \frac{C_0\eps}{n}}^{\alpha n} - 1} \le \E{|Z_{K}|} C\eps\,.
\end{align*}
Note that $|\Xi|$ is a non-negative integer, so the events $\cb{|\Xi| \le 1/2}$ and $\cb{|\Xi| = 0}$ are almost surely equal. Thus, Markov's inequality applied to $|\Xi|$ immediately yields the theorem. 
\end{proof}

\section*{Acknowledgment}
We thank Jonathan Niles-Weed for invaluable conversations and feedback at all stages of this manuscript. We thank Will Perkins for insightful feedback on a previous draft. We are also grateful to the anonymous reviewers. This work was funded in part by an NYU MacCracken fellowship, NSF Graduate Research Fellowship Program grant DGE-1839302, and NSF grant DMS-2015291.

\providecommand{\bysame}{\leavevmode\hbox to3em{\hrulefill}\thinspace}
\providecommand{\MR}{\relax\ifhmode\unskip\space\fi MR }
\providecommand{\MRhref}[2]{%
  \href{http://www.ams.org/mathscinet-getitem?mr=#1}{#2}
}
\providecommand{\href}[2]{#2}

\section*{Appendix}
\begin{proof}[Proof of \cref{lemma:free-energy}]
All claims follow directly from a combination of lemmas in \cite{ALS1} and \cite{APZ}. The goal of this proof is simply to organize their contributions. We refer the reader to these two papers for details. \\

\textbf{Claims 1 and 2.} The first claim is an easy computation. The second claim is proven in \cite{APZ} (page 7). Indeed, they show that $F''(1/2) < 0$ for any $K > 0$ if $\alpha \le \ac$. Fixing $K$, we thus have by monotonicity in $\alpha$ of \cref{eq:F''(1/2)}, a uniform upper bound on $F''(1/2)$. \\

\textbf{Claim 3.} Simply compute $F'(0)$ (see e.g. Eq. 12 of \cite{APZ}) and note
\[
    \lim_{\beta \to 0^+} F'(\beta) = -\infty.
\]
Since $F$ is differentiable on $(0,1/2]$ and continuous at $0$, there is then some positive radius around 0 for which $F$ is decreasing. \\

\textbf{Claim 4.} It will suffice to organize the results developed in \cite{ALS1} for the proof of their Lemma 3.4. They treat separately the three cases of: $K > 4$, $K \in [.1, 4]$, and $K \in [0, .1)$ respectively in their Lemmas 4.6; 4.10 and 4.11; and 4.7. We summarize these results. For $K > 4$, they show that $F'$ transitions from negative to positive as $\beta$ from $0$ to $1/2$, with no other sign changes. (We communicate their proof of this below). It then follows for $K > 4$ and $0 \le a \le b \le 1/2$ that
\begin{equation*}
        \max_{\beta \in [a, b]} F_{\ac}(\beta) \le \max \{F_{\ac}(a),F_{\ac}(b)\}\,.
\end{equation*}
In the remaining two cases, while conjecturally the same picture holds, the actual picture that is manageable to prove is slightly different. It is established in \cite{ALS1} only that there are some constants $0 < b_1 \le  b_2 < 1/2$ (depending only on $K$) so that $F$ is decreasing on $(0,b_1)$, increasing on $(b_2,1/2)$, and $F_{\ac}(\beta) < F_{\ac}(1/2) - \eps$ for some $\eps \defeq  \eps(K) > 0$ in between. This will yield \cref{eq:F-max}, which is enough for our purposes. We will reproduce most of the proof for $K>4$ to make this more concrete, since this case has the cleanest analysis. Then, we will remark on how $b_1$ and $b_2$ are chosen for the remaining cases and try to highlight which parts are easy and difficult for the three cases. We leave many details of this summary---especially the actual implementation of the grid search---for the reader to find in \cite{ALS1}. We also emphasize again that the following arguments and ideas we attempt to outline are not our own, but rather completely from \cite{ALS1}. \\

Fix $K$ and $\alpha = \ac(K)$, and denote for short-hand $F \defeq  F_{K,\ac,n}$. By an easy computation,
\begin{equation}\label{eq:F'}
F'(\beta) = -\log(\beta) + \log(1-\beta) + \ac  \frac{-\cE{-\frac{K^2}{2(1-\beta)}} + \cE{- \frac{K^2}{2\beta}}}{\pi q_k(\beta) \sqrt{\beta(1-\beta)}}\,.
\end{equation}
Rearranging this equation, define another quantity $\cL$ that we will aim to bound:
\begin{equation}\label{eq:L-def}
    \cL(\beta) \defeq  \frac{\ac}{\pi} \frac{\cE{-\frac{K^2}{2(1-\beta)}} - \cE{- \frac{K^2}{2\beta}}}{ \sqrt{\beta(1-\beta)} (\log(1-\beta) - \log(\beta))}\,.
\end{equation}
Note that $F'(\beta) > 0$ is equivalent to $\cL(\beta) < q(\beta)$. 

\textbf{Case 1: $4 < K$}. Consider $\beta \in[.2,.5]$. It is easy to check that
\[
    q_K(\beta) \ge q_{4}(.2) > .9\,,
\]
so our goal is to show $\cL(\beta) < .9$ for all $\beta \in [.2,~.5]$.
We have
\begin{equation}\label{eq:K>4-frac}
    \frac{\log(1-\beta) - \log(\beta)}{1/2 - \beta} \ge 4\,,
\end{equation}
and 
\begin{equation}\label{eq:K>4-exp''}
    \frac{d^2}{d\beta^2}\cE{-\frac{K^2}{2\beta}} > 0\,.
\end{equation}
Combined with Taylor's expansion, \cref{eq:K>4-exp''} yields
\begin{equation}\label{eq:K>4-exp}
    \frac{\cE{-\frac{K^2}{2(1-\beta)}} - \cE{- \frac{K^2}{2\beta}}}{-(\beta-1/2)} \le \frac{K^2\cE{-\frac{K^2}{2(1-\beta)}} }{(1-\beta)^2}\,.
\end{equation}
By \cref{eq:K>4-frac} and \cref{eq:K>4-exp}, we simplify the desired inequality:
\begin{align*}
    \cL(\beta) &\le \frac{\ac}{\pi} \frac{\cE{-\frac{K^2}{2(1-\beta)}} - \cE{- \frac{K^2}{2\beta}}}{ \sqrt{\beta(1-\beta)} (1/2-\beta)} \\
    &\le \frac{\ac}{\pi} \frac{K^2\cE{-\frac{K^2}{2(1-\beta)}} } { \sqrt{\beta(1-\beta)} (1-\beta)^2}\,.
\end{align*}
Next, using standard asymptotics for the error function, 
\[
    1 - p_K \ge \cE{-K^2/2}\sqrt{\frac{2}{\pi}} \pa{\frac{1}{K}-\frac{1}{K^3}}\,,
\]
so we have
\begin{equation}
    \ac \defeq  -\frac{\log(2)}{\log(p)} \le \frac{\log 2}{1- p_K} \le \frac{16}{15}\log(2) \sqrt{\frac{\pi}{2}}\cE{K^2/2}\,.
\end{equation}
These inequalities suffice for the sub-case that $x \in [.35,~.5]$. In total:
\begin{align*}
    \cL(\beta) &\le \frac{1}{4}\log(2) \sqrt{\frac{\pi}{2}}\cE{K^2/2} \frac{16}{15} \frac{K^2\cE{-\frac{K^2}{2(1-\beta)}} } { \sqrt{\beta(1-\beta)} (1-\beta)^2} \frac{1}{\sqrt{.2 \cdot .8}} \\
    &\le \cE{K^2\pa{\frac{1}{2} - \frac{1}{2(1-\beta)}} } \frac{K^3}{(1-x)^2}.19 \\
    &< .9 \,.
\end{align*}
In the sub-case that $x \in [.2,~.35]$, replace \cref{eq:K>4-exp} with the trivial inequality
\begin{equation*}
    \frac{\cE{-\frac{K^2}{2(1-\beta)}} - \cE{- \frac{K^2}{2\beta}}}{-(\beta-1/2)} \le \frac{\cE{-\frac{K^2}{2(1-\beta)}}}{-(\beta-1/2)}\,.
\end{equation*}
Then an identical computation easily yields $\cL(\beta) < .9$. So, for all $\beta \in [.2,.5]$, we have $F'(\beta) > 0$. 

Suppose that $\cL'(\beta) < q'(\beta)$ for $\beta \le .2$. Then $L$ and $q$ can cross at most once. Note $F' < 0$ for some ball of positive radius around $\beta = 0$ by claim three, so $L > q$ for all $\beta$ sufficiently small. We have also already shown that $L < q$ for $\beta > .2$. Thus, $\cL$ and $q$ cross exactly once, and we have the desired picture: $F$ is decreasing and then increasing, with no other changes. In order to establish $\cL'(\beta) < q'(\beta)$ for $\beta \le .2$, one can differentiate the definition of $L$, and then apply standard tail-bounds on the error function. \\

\textbf{Case 2: $.1 \le K \le 4$}. The strategy is almost the same. We know that $F$ is decreasing in some neighborhood of 0, so a natural first step is to, again, show that $F$ is increasing in some neighborhood of $1/2$. In the previous case, we were able to establish $F$ is increasing in $[.2,~.5]$. Here, \cite{ALS1} were only able to show this for $\beta \in [.3,~.5]$. The proof is quite similar, except that, roughly speaking, while bounds derived from first-order Taylor expansion sufficed for $K>4$, second-order expansions are needed for $K \in [.1,~.4]$. Next, we again try and make Claim 3 (namely that $F$ is decreasing near 0) quantitative. By looking very close to 0, namely $\beta \in [0,~.005]$, we can actually prove a stronger result with easier computations. Since trivially $p \ge q$, it of course suffices to  show $\cL > p_K$ for $\beta$ near 0. The strategy is to then show that $\cL(.005) > p_K$ and $\cL' < 0$ for $\beta \in [0,.005]$. These are simple computations. Finally, in the remaining region $\beta \in [.005,~.3]$, we would like to check that $F(\beta) - F(1/2) < - \eps$ for some $\eps \defeq  \eps(K) > 0$. This is done in two steps: first, it is relatively straight-forward to check that the derivative of $F_K(\beta) - F_K(1/2)$ with respect to $\beta$ and the derivative with respect to $K$ are both bounded in absolute value by e.g. $6$. Second, do a computerized grid search over the region 
\[
    \cb{(\beta,K):~ \beta \in [.005,~.3], ~ K \in [.1, 4]}\,.
\]
Since we have a bound on how fast $F_K(\beta)-F_K(1/2)$ can change, a step-size can be picked appropriately to obtain a provably tolerable error and conclude. In summary, taking $b_1$ and $b_2$ as $.005$ and $.3$ respectively, the picture we have established is that $F$ is decreasing on $[0,b_1]$; increasing on $[b_2,1/2]$; and strictly negative between. \\

\textbf{Case 3: $0 \le K \le .1$}. Here, $K$ being small will actually make the analysis quite tractable. Indeed, we can for example get decent control over $p$ and $q$, which will be used repeatedly. Using a zero-order and first-order Taylor expansion of $\cE{-x^2/2}$ for $x \in [-K,K]$ for the upper and lower bound respectively, we have trivially
\begin{equation}\label{eq:p-bd-ksmall}
    \frac{.992K}{\sqrt{2\pi}} \le p_K \le \frac{2K}{\sqrt{2\pi}}\,.
\end{equation}
One can also bound $q_K(\beta)$ in a similarly crude way:
\begin{equation}\label{eq:q-bd-ksmall}
    \frac{K^2}{\pi \sqrt{\beta(1-\beta)}}\cE{-\frac{K^2}{2\beta}} \le q_K(\beta) \le \frac{K^2}{\pi \sqrt{\beta(1-\beta)}}\,.
\end{equation}

Note that since $K$ is small, the gap between the upper and lower bounds here is very small if $\beta$ is not too small as well. With these bounds, we are now ready to prove the desired picture. First, we claim for $\beta \in [.27,~.5]$ that $\cL(\beta) \le q(\beta)$. The main difference from the first case ($K > 4$) is that now
\begin{equation}
    \frac{d^2}{d\beta^2}\cE{-\frac{K^2}{2\beta}} < 0\,,
\end{equation}
so we replace \cref{eq:K>4-exp} by
\begin{equation*}
    \frac{\cE{-\frac{K^2}{2(1-\beta)}} - \cE{- \frac{K^2}{2\beta}}}{-(\beta-1/2)} \le \frac{K^2\cE{-\frac{K^2}{2\beta}} }{\beta^2}\,.
\end{equation*}
Next, for $\beta \in (0,K^2/2)$, we need to verify that $\cL(\beta) > q(\beta)$. Indeed, since $\sqrt{\beta}\log(\beta)$ is decreasing for $\beta \le .005$ (and here we are only considering $\beta \le K^2/2 \le .005$),
\begin{align*}
    -\log(p)\cL(\beta) &\defeq  \frac{\log 2}{\pi}\frac{\cE{-\frac{K^2}{2(1-\beta)}} - \cE{- \frac{K^2}{2\beta}}}{ \sqrt{\beta(1-\beta)} (\log(1-\beta) - \log(\beta))} \\
    &\ge \frac{\log 2}{\pi}\frac{1/2}{ \sqrt{\beta} ( - \log(\beta))} \\
    &\ge .29\,,
\end{align*}
where the last line follows from monotonicity of the penultimate formula in $\beta$, and plugging in $\beta = .005$. From here, it remains to check that $-\log(p_K)q(\beta) > .29$, so that $F$ is decreasing on $\beta \in [0,~K^2/2]$. This is readily verified by applying \cref{eq:q-bd-ksmall} to bound $q$; and then crudely bounding $-\log p_K \ge -\log p_{.1}$ using \cref{eq:p-bd-ksmall}. \\

Carrying out the Taylor expansions used in \cref{eq:q-bd-ksmall} and \cref{eq:p-bd-ksmall} to another term, the same argument shows that $F$ is actually decreasing on $[0,~K/12]$ and then increasing on $[.04,~ 1/2]$. Finally, for the remaining region of $[K/12,~.04]$, we can show directly that $F(\beta) < F(1/2)-\eps$ for some $\eps \defeq  \eps(K) > 0$. Indeed, recalling that $F_{\ac(K)}(1/2) = \log(2) + 2\ac(K) \log(p)$, the desired claim follows quickly from the inequality 
\[
    \log \pa{\frac{q(\beta)}{p^2}} < \log\pa{\frac{1}{.99^22\sqrt{\beta}}} < \log \pa{ \frac{2}{\sqrt{K}}}\,.
\]
In summary, for $b_0$ and $b_1$ equal to $K/12$ and $.04$ respectively, we have again established the picture that $F$ is decreasing on $[0,b_1]$; increasing on $[b_2,1/2]$; and strictly negative between. This concludes the analysis of the final case $K \le .01$, and thus the outline of claim four. 
\end{proof}

\end{document}